\newcommand{\Bigabs}[1]{\Bigl\vert #1 \Bigr\vert}
\newcommand{\norm}[1]{\left\Vert #1 \right\Vert}
\newcommand{\Z}{\mathbb{Z}}
\newcommand{\R}{\mathbb{R}}
\newcommand{\angles}[1]{\langle #1 \rangle}
\DeclareMathOperator{\im}{Im}
\DeclareMathOperator{\re}{Re}
\DeclareMathOperator{\sgn}{sgn}
\DeclareMathOperator{\supp}{supp}
\newtheorem{theorem}{Theorem}[section]
\newtheorem{proposition}[theorem]{Proposition}
\newtheorem{remark}[theorem]{Remark}
\newtheorem{lemma}[theorem]{Lemma}
\numberwithin{equation}{section}
\title[Dispersive estimates for full dispersion KP equations]{Dispersive estimates for full dispersion KP equations}
\author[D. Pilod]{Didier Pilod}
\address{Department of Mathematics, University of Bergen, Postbox 7800, 5020 Bergen, Norway}
\email{Didier.Pilod@iuib.no}
\author[J.-C. Saut]{Jean-Claude Saut}
\address{Laboratoire de Math\' ematiques, UMR 8628\\
Univ. Paris-Sud, CNRS, Universit\' e Paris-Saclay \\91405 Orsay, France}
\email{jean-claude.saut@universite-paris-saclay.fr}
\author[S. Selberg]{Sigmund Selberg}
\address{Department of Mathematics, University of Bergen, Postbox 7800, 5020 Bergen, Norway}
\email{Sigmund.Selberg@uib.no}
\author[A. Tesfahun]{Achenef Tesfahun}
\address{Department of Mathematics, University of Bergen, Postbox 7800, 5020 Bergen, Norway}
\email{achenef@gmail.com}
\subjclass[2010]{35A01, 35Q35, 35Q53, 42B20}
\begin{document}

\begin{abstract} 
We prove several dispersive estimates for the linear part of the Full Dispersion Kadomtsev-Petviashvili introduced by David Lannes to overcome some shortcomings of the classical Kadomtsev-Petviashvili equations. The proof of these estimates combines the stationary phase method with sharp asymptotics on \emph{asymmetric Bessel functions}, which may be of independent interest.  As a consequence, we prove that the initial value problem associated to the Full Dispersion Kadomtsev-Petviashvili  is locally well-posed in $H^s(\mathbb R^2)$, for $s>\frac74$, in the capillary-gravity setting. 
\end{abstract}

\maketitle
\section{Introduction}

\subsection{Introduction of the model and physical motivation}

The classical Kadomtsev-Petviashvili equation (KP)
\begin{equation}\label{KP}
\partial_tu+\partial_{x_1}u+u\partial_{x_1}u+\partial_{x_1}^3u\pm \partial_{x_1}^{-1}\partial_{x_2}^2u=0,
\end{equation}
where $+$ corresponds to KP-II and $-$ to KP-I, was introduced in  the pioneering paper \cite{KaPe} in order to investigate the  stability properties of the KdV soliton with respect to long wave perturbations in the transverse direction. We are here in a long wave regime, that is the wavelengths in $x_1$ and $x_2$ are large, those in $x_2$ being larger.

Actually the derivation in \cite {KaPe} was formal and concerned only the linear transport part of equation \eqref{KP}, in particular it is independent of the dispersive and nonlinear terms.
It is only related  to the finite propagation speed properties of the transport operator
$M=\partial_{t}+\partial_{x_1}$. 

Recall that $M$ gives rise to one-directional waves moving to the right with speed one;
i.e., a profile $\varphi(x_1)$ evolves under the flow of $M$ as $\varphi(x_1-t)$.
A weak transverse perturbation of $\varphi(x_1)$ 
is a two-dimensional function $\psi(x_1,x_2)$ close to $\varphi(x_1)$, localised in the frequency region
$\big|\frac{\xi_2}{\xi_1}\big|\ll 1$, where $\xi_1$ and $\xi_2$ are the Fourier modes corresponding to $x_1$ and $x_2$, respectively.
We look for a two-dimensional perturbation  
\[\widetilde{M}=\partial_{t}+\partial_{x_1}+\omega(D_{1},D_{2}) \] of $M$ such that,
similarly to above, the profile of $\psi(x_1,x_2)$ does not change much
when evolving under the flow of $\widetilde{M}$. Here $\omega(D_{1},D_{2})$ denotes the Fourier multiplier with symbol
the real function $\omega(\xi_1,\xi_2)$. 
Natural generalizations of the flow of $M$ in two dimensions are the flows of the wave operators 
$\partial_{t}\pm\sqrt{-\Delta}$ which enjoy the finite propagation speed
property. 
Since
\[
\sqrt{\xi_1^2+\xi_2^2} \sim \pm\left(\xi_1+\frac{1}{2}\xi_1^{-1}\xi_2^{2}\right) , \quad  {\rm when }\quad |\xi_1|, \Big|\frac{\xi_2}{\xi_1}\Big|\ll 1,
\]
we deduce the approximation in this regime
\[
\partial_{t}+\partial_{x_1}+\frac{1}{2}\partial_{x_1}^{-1}\partial_{x_2}^{2}\sim
\partial_{t}\pm \sqrt{-\Delta},
\]
which leads to the correction $\omega(D_1,D_2)=\frac{1}{2}\partial_{x_1}^{-1}\partial_{x_2}^{2}$.

Of course when the transverse effects are two-dimensional, the correction is $\frac{1}{2}\partial_{x_1}^{-1}\Delta_{\perp}$,
where $\Delta_{\perp}=\partial_{x_2}^{2}+\partial_{x_3}^{2}$.

Note that the term $\frac{1}{2}\partial_{x_1}^{-1}\partial_{x_2}^{2}$ leads to a singularity at $\xi_1=0$ in Fourier space which is not present in the original physical context where the KdV equation was derived and there is a price to pay for that, various  shortcomings of the KP equation that we will describe now.

The first one concerns the accuracy of the KP approximation as a water wave model.
As aforementioned, the derivation in \cite{KaPe} did not refer to a specific physical content. Its formal derivation in the context of water waves was done in \cite{AS} but a rigorous derivation, including error estimates, was only achieved in \cite{L}. It is shown there that the error estimate between solutions of the full water waves system and solutions of the KP-II equation has the form, in suitable Sobolev norms, and for a fixed time interval,
$$||U_{WW}-U_{KP}||=o(1),$$
while the corresponding error in the Boussinesq (KdV) regime  is $O(\epsilon^2t)$ where the small parameter $\epsilon$ measures the comparable effects  of shallowness and nonlinearity.

Another shortcoming of the KP equation is the (unphysical) constraint implied by the $\partial_{x_1}^{-1}\partial_{x_2}u$ term. Actually, in order to make sense, $u$ should satisfy the constraint
$\hat u(0,\xi_2)=0,\; \forall \xi_2\in \R,$ or alternatively $\int_{\R}u(x_1,x_2)dx_1=0,\; \forall x_2\in \R,$ that makes no sense for real waves. We refer to \cite{MST1} for further comments and results on this \lq\lq constraint problem.\rq\rq

Another drawback is a singularity at time $t=0$ that is already present at the linear level. Denoting by $S_{\pm}(t)$ the (unitary in $L^2$)  linear group of the KP equations, one can express the linear solution corresponding to any $L^2$ initial data $u_0$ (without any constraint) in Fourier variables by
$$S_{KP\pm}(t)\widehat{u_0}(\xi_1,\xi_2)=\hat u(\xi_1,\xi_2,t)=\exp\left\lbrace it\left(\xi_1^3\pm \frac{\xi_2^2}{\xi_1}\right)\right\rbrace\widehat{u_0}(\xi_1,\xi_2),$$
which defines of course a unitary group  in any Sobolev space $H^s(\R^2)$. 
On the other hand, even for smooth initial data, say in the Schwartz class, the relation
$$u_{x_1t}=u_{tx_1}$$
holds true only in a very weak sense, {\it e.g.} in $\mathcal S'(\R^2),$ if $u_0$ does not satisfy the constraint $\hat u_0(0,\xi_2)=0$ for any $\xi_2 \in \R$, or equivalently $\int_{-\infty}^{\infty} u_0(x_1,x_2)dx_2 =0$ for any $\xi_2\in \R$.

In particular, even for smooth localised $u_0,$ the mapping 
$$\hat{u}_0\mapsto \partial_t \hat{u}=i\left(\xi_1^3\pm \frac{\xi_2^2}{\xi_1}\right)\exp\left\lbrace it\left(\xi_1^3\pm \frac{\xi_2^2}{\xi_1}\right)\right\rbrace \hat{u}_0(\xi)$$
cannot be defined with values in a Sobolev space if $u_0$ does not satisfy the zero mass constraint.
In particular, if $u_0$ is a gaussian, $\partial_t u$ is not even in $L^2.$



\smallskip
Those shortcomings have led David Lannes \cite{La} to introduce in the KP regime a full dispersion counterpart of the KP equation that would not suffer of such defects or at least at a lower level,\footnote{We refer to \cite {LS2} for another approach for an asymptotic water model in the KP regime leading to a local {\it weakly transverse Boussinesq system} leading to the optimal error estimate with the solutions of the full water waves system.}.

\vspace{0.3cm}
This full dispersion KP equation (FDKP) reads
\begin{equation}\label{FDKP}
\partial_tu+ L_{\beta , \epsilon} (D) \left(   1+ \epsilon \frac{D_2^2}{D_1^2}\right)^\frac12 \partial_{x_1} u+ 3\epsilon \partial_{x_1}( u^2)=0  \, ,
\end{equation}
where $u=u(x_1,x_2,t)$ is a real-valued function, $(D_1, D_2)=(-i\partial_{x_1}, -i\partial_{x_2})$,  $D^\epsilon= (D_1, \sqrt{\epsilon}D_2)$, hence $$ |D^\epsilon| =\sqrt{D_1^2 +\epsilon D_2^2 },$$ and $ L_{\beta, \epsilon} $ is a non--local operator defined by
$$L_{\beta, \epsilon} (D)= \left(1+  \beta  \epsilon |D^\epsilon|^2\right)^\frac 12 \left(\frac{\tanh( \sqrt \epsilon  |D^\epsilon|)}{\sqrt \epsilon  |D^\epsilon|}\right)^{1/2}.$$
Here $\beta\ge 0$ is a dimensionless coefficient measuring the surface tension
    effects and $\epsilon >0$ is the shallowness parameter which is proportional to the ratio of the amplitude of the wave to the mean depth of the fluid.
    
In the case of purely gravity waves ($\beta=0$), the symbol of \eqref{FDKP}  writes 
\begin{equation}\label{symbol:p}
p(\xi_1,\xi_2)=\frac{i}{ \epsilon^{1/4}} \left(\tanh [ \sqrt\epsilon (\xi_1^2+\epsilon \xi_2^2)^{\frac{1}{2}}]\right)^{\frac{1}{2}} (\xi_1^2+\epsilon \xi_2^2)^{\frac{1}{4}} \text{sgn}\;\xi_1,
\end{equation}
while in the case of gravity-capillary waves ($\beta>0$), the symbol is
\begin{equation} \label{symbol:pbeta}
\tilde p(\xi_1,\xi_2)= \left(1+\beta\epsilon(\xi_1^2+\epsilon \xi_2^2)\right)^{1/2}p(\xi_1,\xi_2) \, .
\end{equation}
The symbols $p$ and $\tilde p$ being real, it is clear that the linearized equations define unitary groups in all Sobolev spaces $H^s(\R^2), s\in \R.$
    
Contrary to the KP case,  $p$ and $\tilde p$  are locally bounded on $\R^2.$ However they are not continuous on the line $\lbrace (0,\xi_2), \xi_2\neq 0\rbrace,$ but they do not have the singularity $\frac{i}{\xi_1}$ of the KP equations symbols.

We now describe some links between the FDKP equation and related nonlocal dispersive equations.
We first observe that for waves depending only on $x_1$, the FDKP equation when $\beta =0$  reduces to the so-called Whitham equation (\cite{KLPS}
\begin{equation}\label{Whit_1d}
\partial_t u+\left(\frac{\tanh (\sqrt \epsilon |D_1|)}{\sqrt \epsilon |D_1|}\right)^{1/2}\partial_{x_1}u+\epsilon \frac{3}{2} u\partial_{x_1}u=0,
\end{equation}
and when $\beta>0$, it reduces to the  Whitham equation with surface tension
\begin{equation}\label{Whitbis_1d}
\partial_t u+(1+\beta \epsilon D_1^2)^{1/2}\left(\frac{\tanh (\sqrt \epsilon|D_1|)}{\sqrt \epsilon |D_1|}\right)^{1/2}\partial_{x_1}u+\epsilon \frac{3}{2} u\partial_{x_1}u=0.
\end{equation}

Note that the Whitham equations can be seen for large frequencies as perturbations of the fractional KdV (fKdV) equations 
\begin{equation}\label{fKdV1}
\partial_t u+\partial_{x_1}u+\beta^{1/2}\epsilon^{1/4}|D_1|^{1/2}\partial_{x_1}u+\epsilon \frac{3}{2}u\partial_{x_1}u=0 \, ,
\end{equation}
when $\beta>0$, and 
\begin{equation}\label{fKdV2}
\partial_t u+\partial_{x_1}u+\epsilon^{-1/4}|D_1|^{-1/2}\partial_{x_1}u+\epsilon \frac{3}{2}u\partial_{x_1}u=0 \, ,
\end{equation}
when $\beta=0$.

The FDKP equation may be therefore seen as a natural (weakly transverse)  two-dimensional version of the Whitham equation, with and without surface tension.

On the other hand, those fKdV equations  have KP versions, namely the fractional KP equations (fKP), see \cite{LPS1}, which write for a general nonlocal operator  $D_1^\alpha$ :
\begin{equation}\label{fKP}
\partial_t u+u\partial_{x_1}u-D_1^\alpha\partial_{x_1}u\pm \partial_{x_1}^{-1}\partial_{x_2}^2 u=0,\quad -1<\alpha <1
\end{equation}
in its two versions, the fKP-II ($+$ sign) and the fKP-I version ($-$ sign).

The fKP equation has several motivations. First, when $\alpha =1$ the fKP-II equation is the relevant version of the Benjamin-Ono equation. For general values of $\alpha$ the fKP equation is the KP version of the fractional KdV equation (fKdV), which in turn is a useful toy model to understand the effect of a \lq\lq weak\rq\rq \, dispersion on the dynamics of the inviscid Burgers equation. When $-1<\alpha<0$, both equations are mainly \lq\lq hyperbolic\rq\rq, with the possibility of shocks (but a dispersive effect leading to the possibility of global existence and scattering of small solutions) while when $0<\alpha<1$ the dispersive effects are strong enough to prevent the appearance of shocks for instance. We refer to \cite{LPS2, LPS1, KLPS, KS} for results and numerical simulations on those equations. 

The fKP equation is also the KP version of the (inviscid) Khokhlov-Zabolotskaya-Kuznetsov (KZK) equation (see \cite{Roz}) which has a \lq\lq hyperbolic\rq\rq \, character with the possible appearance of shocks.

Those fKP equations are not directly connected to the FDKP equation but they share the property of being two dimensional nonlocal dispersive perturbations of the Burgers equation.

\smallskip
 Some of the properties of the FDKP equations are  displayed in \cite{LS2}.  In particular, it is easy to check by viewing it as a skew-adjoint perturbation of the Burgers equation, that the Cauchy problem is locally well-posed, without need of any constraint, in $H^s(\R^2), s>2.$ Note that this result does not use any dispersive property of the linear group. 
    
 The natural energy space associated to the FDKP equation is, in the case without surface tension $\beta=0$,
 $$E=\left\lbrace u\in L^2(\R^2)\cap L^3(\R^2) : |D^\epsilon|^{1/4} |D_1|^{-1/2} u, |D^\epsilon|^{1/2}|D_1|^{-1/2} u\in L^2(\R^2)\right\rbrace  .$$
This space is associated to a natural Hamiltonian. In fact, as for the classical KP I/II equations, the $L^2$ norm is formally conserved by the flow of  \eqref{FDKP}, and so is the Hamiltonian 
\begin{equation}\label{Hamil}
\mathfrak H_\epsilon(u)=\frac{1}{2}\int_{\R^2}|H_\epsilon(D) u|^2+\frac{\epsilon}{4}\int_{\R^2} u^3,
\end{equation}
where
\begin{equation}\begin{split}
H_\epsilon(D)&=\left(\frac{(1+\sigma\epsilon |D^\epsilon|^2)\tanh (\sqrt \epsilon |D^\epsilon|)}{\epsilon^{1/2}|D_\epsilon|}\right)^{1/4}\left(1+\epsilon \frac{D_2^2}{D^2_1}\right)^{1/4}\\ 
 &=\left(\frac{(1+\sigma\epsilon |D^\epsilon|^2)\tanh (\sqrt \epsilon |D^\epsilon|)}{\epsilon^{1/2}}\right)^{1/4}\frac{|D^\epsilon|^{1/4}}{|D_1|^{1/2}} \, .
 \end{split}\end{equation} 
The cases $\beta=0$ and $\beta>0$ correspond respectively to purely gravity waves and capillary-gravity waves.

One finds the standard KP I/II Hamiltonians by expanding   formally $H_\epsilon(D)$ in powers of $\epsilon,$ namely
$$H_\epsilon(D)(u)= \frac{\epsilon}{4}\int_{\R^2}\lbrack|\partial_{x_2}\partial_{x_1}^{-1}u|^2+(\beta-\frac{1}{3}) |\partial_{x_1}u|^2+ u^3\rbrack dx_1dx_2+o(\epsilon).$$

Contrary to the Cauchy problem which can be solved without constraint,  the Hamiltonian for the FDKP equation  is well defined (and conserved by the flow) provided $u$ satisfies a constraint, weaker however than that of the classical KP equations.\footnote{In the sense that the order of vanishing of the Fourier transform at the frequency $\xi_1=0$ is weaker than the corresponding one for the KP equations.}

Finally, as noticed in \cite{KS} by considering the solution of the linear KP I/II equations,
$$\hat{u}(\xi_1,\xi_2,t)=\hat{u_0}(\xi_1,\xi_2)\exp \left(it\left(\xi_1^3\pm \frac{\xi_2^2}{\xi_1}\right)\right),$$
the singularity $\frac{\xi_2^2}{\xi_1}$ implies that a strong decay of the initial data is not preserved by the linear flow,\footnote {and also actually for the nonlinear flow.} for instance the solution corresponding to a gaussian initial data cannot decay faster than $1/({x_1}^2+{x_2}^2)$ at infinity. In fact, the Riemann-Lebesgue theorem implies that $u(\cdot,t)\notin L^1(\R^2)$ for any $t\neq 0.$ The same conclusion holds of course even if $u_0$ satisfies the zero-mass constraint, {\it e.g.} $u_0\in \partial_x \mathcal S(\R^2)$ and also for the nonlinear problem as shows the Duhamel representation of the solution, see \cite{KS}.

\vspace{0.3cm}
A similar obstruction  holds for the  FDKP equations. In particular, the localised solitary waves solutions found in \cite{EG} cannot decay fast at infinity.

\subsection{Presentation of the results}
 In order to study the Cauchy problem associated to FDKP in spaces larger than the \lq\lq hyperbolic space\rq\rq \, $H^s(\R^2)$, $s>2$, and to investigate the scattering of small solutions, we will focus in this paper on the derivation of dispersive  estimates on the linear group. This is not a simple matter since the symbol  $p$ and $\tilde{p}$ defined in \eqref{symbol:p} and \eqref{symbol:pbeta} are non-homogeneous and also non-polynomial. Similar difficulties occur for other non-standard dispersive equation such as the Novikov-Veselov equation \cite{KM} or a higher dimensional version of the Benjamin-Ono equation \cite{HLRRW}. 
 
 In the rest of the paper, we work with $\epsilon=1$. Based on the identity
$$
 \left(   1+\frac{D_2^2}{D_1^2}\right)^\frac12 \partial_{x_1} = \frac{iD_1}{|D_1|}  |D| \, ,
$$
we rewrite \eqref{FDKP} as
\begin{equation}\label{FDKP1}
\partial_tu+ \widetilde L_{\beta} (D)  u+ 3 \partial_{x_1}( u^2)=0 \,  ,
\end{equation}
where 
$$ \widetilde L_{\beta} (D)=\frac{iD_1}{|D_1|}  |D| \left(1+  \beta  |D|^2\right)^\frac 12 \left(\frac{\tanh( \sqrt   |D^|)}{\sqrt   |D|}\right)^{1/2}  .$$

The solution propagator for the linear equation is given by 
\begin{align} \label{lin:KPFD}
\left[ S_{m_{\beta}} (t) f \right](x) &:= \int_{\R^2} e^{ix \cdot \xi+it \sgn(\xi_1) m_{\beta} \left(|\xi |\right) }  \hat f(\xi)   \, d\xi \,  ,
\end{align}
where 
\begin{equation} \label{def:m_beta}
m_{\beta} (r)=r \left(1+  \beta   r^2\right)^\frac 12 \left( \frac{\tanh(r)}{ r}\right)^\frac12 
\end{equation} 
and $|\xi|=\sqrt{\xi_1^2+ \xi_2^2}$.

Our first result is a $L^1-L^{\infty}$ decay estimate for the linear propagator associated to \eqref{FDKP}. Since the symbol $m_{\beta}$ is non-homogeneous, we will derive our estimate for frequency localised functions. For a dyadic number $\Lambda\in 2^\Z$, let $P_{\Lambda}$ denote the Littlewood-Paley projector localising the frequency around the dyadic number $\Lambda$ (a more precise definition of $P_{\Lambda}$ will be given in the notations below).

\begin{theorem}[Localised dispersive estimate] \label{lm-dispest}
Let $\beta \in \{ 0, 1\}$. Then, there exists a positive constant $c_{\beta}$ such that
\begin{align}
\label{dispest}
\| S_{m_{\beta}}( t)P_{\Lambda} f  \|_{L^\infty_x(\R^2)} &\le c_{\beta}  \angles{ \sqrt{  \beta} \Lambda }^{-1} \angles{ \Lambda }^{\frac32}  |t|^{-1}  \|P_{\Lambda} f\|_{L_x^1(\R^2)} 
\end{align}
for all $\Lambda\in 2^\Z$ and $f \in \mathcal{S}(\R^{2})$, and where $\angles{\xi}:= \left(1 +|\xi|^2\right)^\frac12.$
 \end{theorem}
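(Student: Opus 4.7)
The plan is to bound the Schwartz kernel of $S_{m_\beta}(t)P_\Lambda$, namely
\[
K_\Lambda(t,x) = \int_{\R^2} e^{i(x\cdot\xi + t\sgn(\xi_1)m_\beta(|\xi|))}\chi_\Lambda(\xi)\d\xi,
\]
uniformly in $L^\infty_x$, where $\chi_\Lambda$ denotes the Littlewood--Paley multiplier symbol for $P_\Lambda$. By Young's inequality it suffices to prove
\[
\fixednorm{K_\Lambda(t,\cdot)}_{L^\infty_x} \lesssim \angles{\sqrt{\beta}\Lambda}^{-1}\angles{\Lambda}^{3/2}|t|^{-1}.
\]
The phase being discontinuous across $\{\xi_1=0\}$, we first split $K_\Lambda = I_\Lambda^+ + I_\Lambda^-$ by restricting to $\pm\xi_1>0$; the substitution $\xi_1\mapsto -\xi_1$ gives $I_\Lambda^-(t,x_1,x_2) = \overline{I_\Lambda^+(-t,-x_1,x_2)}$, reducing matters to $I_\Lambda^+$.

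In polar coordinates $\xi = r(\cos\theta,\sin\theta)$ with $\theta \in (-\pi/2,\pi/2)$,
\[
I_\Lambda^+(t,x) = \int_0^\infty r\chi_\Lambda(r)e^{itm_\beta(r)}\mathcal{B}(r,x)\d r,\qquad \mathcal{B}(r,x) := \int_{-\pi/2}^{\pi/2}e^{ir(x_1\cos\theta+x_2\sin\theta)}\d\theta.
\]
The factor $\mathcal{B}$ is an \emph{asymmetric Bessel function}: writing $x = |x|(\cos\phi,\sin\phi)$ gives $\mathcal{B}(r,x) = \int_{-\pi/2-\phi}^{\pi/2-\phi}e^{ir|x|\cos\theta'}\d\theta'$, i.e.\ integration of $e^{ir|x|\cos\cdot}$ over an arc of length $\pi$. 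We apply the sharp asymptotics for such $\mathcal{B}$ (which the paper develops as its main technical input): for $r|x|\gg 1$,
\[
\mathcal{B}(r,x) = \frac{A(x)}{(r|x|)^{1/2}}\,e^{i\sgn(x_1)r|x|} + R(r,x),
\]
where the amplitude $A$ is smooth and bounded on the unit sphere, and the remainder $R$ is of order $(r|x|)^{-1}$, arising from the endpoints $\theta = \pm\pi/2$ and handled by repeated integration by parts.

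Inserting this expansion reduces $I_\Lambda^+$, up to lower-order remainders, to the 1D oscillatory integral
\[
|x|^{-1/2}A(x)\int_0^\infty r^{1/2}\chi_\Lambda(r)e^{i\Phi(r)}\d r,\qquad \Phi(r) = tm_\beta(r) + \sgn(x_1)r|x|,
\]
to which we apply 1D stationary phase (van der Corput). The radial critical point $r_0$ satisfies $|m_\beta'(r_0)| = |x|/|t|$; if $r_0\in\supp\chi_\Lambda$ (so $r_0\sim\Lambda$), it contributes a factor $(|t||m_\beta''(r_0)|)^{-1/2}$, which combined with the amplitude $r_0^{1/2}/|x|^{1/2}\sim \Lambda^{1/2}/|x|^{1/2}$ and $|x|\sim|t|m_\beta'(\Lambda)$ yields
\[
|I_\Lambda^+(t,x)| \lesssim \frac{1}{|t|}\sqrt{\frac{\Lambda}{m_\beta'(\Lambda)|m_\beta''(\Lambda)|}}.
\]
Direct differentiation of \eqref{def:m_beta}, together with the expansions $m_\beta(r)\sim r$ near $r=0$, $m_0(r)\sim r^{1/2}$ and $m_1(r)\sim r^{3/2}$ as $r\to\infty$, shows that this square root is comparable to $\angles{\sqrt{\beta}\Lambda}^{-1}\angles{\Lambda}^{3/2}$ uniformly in dyadic $\Lambda\in 2^{\Z}$ and $\beta\in\{0,1\}$; when no critical point lies in $\supp\chi_\Lambda$, iterated integration by parts in $r$ provides arbitrarily fast decay in $|t|$. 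The main obstacles are the uniform control of the asymmetric Bessel asymptotics in the degenerate regime $x_1/|x|\to 0$, where the angular stationary point coalesces with an endpoint of integration, together with verifying the non-degeneracy and the expected sizes of $m_\beta'$ and $m_\beta''$ on $(0,\infty)$ needed to make the radial stationary phase step uniformly valid.
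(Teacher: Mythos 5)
Your overall strategy is the same as the paper's (reduce to an $L^\infty$ kernel bound by Young, pass to polar coordinates, expand the asymmetric Bessel factor, then do one-dimensional stationary phase in $r$ using the sizes of $m_\beta'$ and $m_\beta''$), and your main-term bookkeeping is consistent: $t^{-1/2}$ from van der Corput times the amplitude $(\Lambda|x|)^{-1/2}$ with $|x|\sim m_\beta'(\Lambda)t$ does reproduce $\angles{\sqrt{\beta}\Lambda}^{-1}\angles{\Lambda}^{3/2}|t|^{-1}$. But there is a genuine gap in how you dispose of the rest of the Bessel expansion. The endpoint contributions at $\theta=\pm\pi/2$ are \emph{not} a harmless remainder of order $(r|x|)^{-1}$ killable by repeated integration by parts: uniformly in the direction of $x$ they only decay like $(r|x|)^{-1/2}$ (when $|x_2|/|x|$ is close to $1$ the angular stationary point coalesces with the endpoint), and, more importantly, they oscillate in $r$ with the phase $e^{\pm ir|x_2|}$ rather than $e^{i\sgn(x_1)r|x|}$. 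When combined with $e^{itm_\beta(\Lambda r)}$, the term with the unfavourable sign has its own radial stationary point at $|x_2|\sim m_\beta'(\Lambda)t$, which can occur even when your main phase $\Phi$ is non-stationary; bounding it in absolute value gives only $\Lambda|x|^{-1}$, which is not controlled by $|t|^{-1}$ times the claimed constant. This is precisely the hardest case in the paper's proof: Proposition \ref{propJ+} gives \emph{exact} identities (via contour integration) splitting $J_+$ into three pieces with phases $e^{\pm i\Lambda|x|r}$ and $e^{\pm i\Lambda|x_2|r}$, each with amplitude decaying like $(\Lambda|x|)^{-1/2}$ uniformly in the angular parameter $a\in[0,1]$, and then each piece is treated by non-stationary phase or van der Corput, using $|x|\ge|x_2|\sim\angles{\sqrt{\beta}\Lambda}\angles{\Lambda}^{-1/2}t$ in the stationary case to upgrade $t^{-1/2}$ to $t^{-1}$. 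You name this degenerate regime as an ``obstacle'' but do not resolve it, so the proof is incomplete exactly where the real work lies.

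Two further points you leave open. First, the region $\Lambda|x|\lesssim 1$, where the Bessel asymptotics are unavailable, needs a separate argument (the paper integrates by parts in $r$ using only the phase $tm_\beta(\Lambda r)$, with $J_+(\Lambda rx)$ and its $r$-derivative bounded, which again requires the lower bound on $m_\beta'$). Second, the two-sided estimates $m_\beta'(r)\sim\angles{\sqrt{\beta}r}\angles{r}^{-1/2}$ and $|m_\beta''(r)|\sim r\angles{\sqrt{\beta}r}\angles{r}^{-5/2}$, uniformly over all scales and for $\beta\in\{0,1\}$, are asserted (``direct differentiation shows'') rather than proved; establishing them, in particular the non-vanishing of $m_\beta''$, is the content of the paper's appendix Lemma \ref{lm-mest} and cannot simply be cited as an expected fact.
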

 
 By a standard argument, the proof reduces to proving a uniform bound for the two dimensional  oscillatory integral
 \begin{equation} \label{def:I}
 I_{\Lambda, t} (x)= \int_{\R^2} e^{i  x \cdot \xi+it \sgn (\xi_1) m_{\beta} \left(|\xi |\right) }  \rho(\Lambda^{-1}|\xi|) \, d\xi \, ,
 \end{equation}
 where $\rho$ is a smooth function whose compact support is localised around $1$.
 Observe that in the KP and fractional KP cases, the corresponding oscillatory integral
$$\int_{\R^2} e^{it\left(\varphi(\xi_1)+ \frac{\xi_2^2}{\xi_1}\right)+ix\cdot \xi}d\xi \, $$
 can be reduced to a one-dimensional integral by integrating in $\xi_2$ and using the explicit representation of the linear Schr\"odinger propagator (see \cite{S, MST1,LPS2}). 

This is not the case anymore for the oscillatory integral \eqref{def:I} and for this reason we need to employ 2-dimensional methods. After passing to polar coordinates, we write
\begin{equation}\label{I-lamb}
  I_{\Lambda, t} (x)=\Lambda^2  \int_0^\infty\left[ e^{it m_\beta( \Lambda r)}J_+(\Lambda rx) +  e^{-itm_\beta( \Lambda r)}J_-( \Lambda r x)  \right] r \rho(r)   \, dr \, ,
  \end{equation}
where
\begin{equation}\label{asymbesselfunc}
J_\pm(x) = \int_{\omega \in S^1} \mathbb {1}_{\{\pm \omega_1>0\}}  e^{i   x \cdot \omega}  \, d\sigma(\omega) 
\end{equation}
are \emph{asymmetric Bessel functions}. Then, by using complex integration, we derive sharp asymptotics for these asymmetric Bessel functions which may be of independent interest (see Proposition \ref{propJ+}). With these asymptotics in hand, we can conclude the proof of Theorem \ref{lm-dispest} by combining the stationary phase method  with careful estimates on the symbol $m_{\beta}$ and its derivatives both in the case $\beta=0$ and $\beta=1$.

Once Theorem \ref{lm-dispest} is proved, the corresponding Strichartz estimates are deduced from a classical $TT^{\star}$ argument.
\begin{theorem}[Localised Strichartz estimates]\label{lm-LocStr}
Let $\beta \in \{ 0, 1\}$.  Assume that $q, r$ satisfy
  \begin{equation} \label{admissible}
 2<q \le \infty, \ 2 \le r < \infty \quad \text{and} \quad \frac1r+\frac1q=\frac12 \, .
 \end{equation} 
 Then, there exists a positive constant $c_{\beta}$ such that
 \begin{align}
\label{Str2d}
\norm{S_{m_\beta}(t) P_{\Lambda}f }_{ L^{q}_{t} L^{r}_{ x} (\R^{2+1}) } \le c_{\beta} \left[ \angles{ \sqrt{  \beta} \Lambda }^{-1} \angles{ \Lambda }^{\frac32} \right]^{\frac 12- \frac1r }
\norm{ P_{\Lambda}f}_{ L^2_{ x}(\R^2 )} \, 
\end{align}
for all $\Lambda\in 2^\Z$ and all $f \in \mathcal{S}(\R^{2})$, and where $\angles{\xi}:= \left(1 +|\xi|^2\right)^\frac12.$
\end{theorem}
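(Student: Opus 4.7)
The plan is to deduce the claim from Theorem~\ref{lm-dispest} by the classical $TT^*$ argument, so no additional analysis of oscillatory integrals will be required.

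First I would dispose of the endpoint $(q,r) = (\infty, 2)$: since $m_\beta$ is real-valued, $S_{m_\beta}(t)$ is unitary on $L^2_x$, and $P_\Lambda$ is bounded on $L^2_x$ uniformly in $\Lambda$. Consequently $\|S_{m_\beta}(t) P_\Lambda f\|_{L^\infty_t L^2_x} \le \|P_\Lambda f\|_{L^2_x}$. Since the exponent $\tfrac12 - \tfrac1r$ vanishes here, the right-hand factor in \eqref{Str2d} equals one and the estimate is trivial.

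For the remaining admissible pairs (those with $q < \infty$) I would set $Tf := S_{m_\beta}(\cdot) P_\Lambda f$ as an operator from $L^2_x$ into $L^q_t L^r_x$. Unitarity of $S_{m_\beta}$ and self-adjointness of $P_\Lambda$ give
\begin{equation*}
TT^* g(t) = \int_{\R} S_{m_\beta}(t-s) P_\Lambda^2 g(s) \, ds.
\end{equation*}
Writing $M_\Lambda := \angles{\sqrt{\beta}\, \Lambda}^{-1} \angles{\Lambda}^{3/2}$, Theorem~\ref{lm-dispest} combined with the $L^1$-boundedness of the Littlewood-Paley convolution kernel yields
\begin{equation*}
\|S_{m_\beta}(t-s) P_\Lambda^2 g(s)\|_{L^\infty_x} \lesssim M_\Lambda |t-s|^{-1} \|g(s)\|_{L^1_x},
\end{equation*}
while unitarity produces the matching $L^2_x \to L^2_x$ bound $\|S_{m_\beta}(t-s) P_\Lambda^2 g(s)\|_{L^2_x} \lesssim \|g(s)\|_{L^2_x}$. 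Riesz-Thorin interpolation between these two endpoints then gives, for every $r \in [2, \infty)$,
\begin{equation*}
\|S_{m_\beta}(t-s) P_\Lambda^2 g(s)\|_{L^r_x} \lesssim M_\Lambda^{1-2/r} |t-s|^{-(1-2/r)} \|g(s)\|_{L^{r'}_x}.
\end{equation*}

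I would close the argument by taking the $L^r_x$ norm inside the $s$-integral via Minkowski, and applying the one-dimensional Hardy-Littlewood-Sobolev inequality in time; the admissibility condition $\tfrac1q + \tfrac1r = \tfrac12$ converts the temporal decay exponent $1 - 2/r$ into exactly $2/q$, which lies in $(0,1)$ precisely when $q > 2$. This explains both why the endpoint $q = 2$ is excluded (HLS fails there) and why the endpoint $q = \infty$ had to be handled separately above. The resulting bound
\begin{equation*}
\|TT^* g\|_{L^q_t L^r_x} \lesssim M_\Lambda^{1 - 2/r} \|g\|_{L^{q'}_t L^{r'}_x},
\end{equation*}
combined with the duality identity $\|T\|^2 = \|TT^*\|$, yields $\|T\|_{L^2_x \to L^q_t L^r_x} \lesssim M_\Lambda^{1/2 - 1/r}$, which is exactly \eqref{Str2d}. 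I do not foresee any genuine obstacle: the only small verification is that $P_\Lambda^2$ obeys the same $L^p$ bounds as $P_\Lambda$ with constants independent of $\Lambda$, which follows from Young's inequality applied to the scale-invariant Littlewood-Paley kernel.
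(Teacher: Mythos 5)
Your proof is correct and follows essentially the same route as the paper: the $(\infty,2)$ endpoint by unitarity, the classical $TT^*$ reduction, interpolation between the $L^1\to L^\infty$ dispersive bound of Theorem~\ref{lm-dispest} and the $L^2$ energy bound, and then Minkowski plus Hardy--Littlewood--Sobolev in time. The one small point you flag at the end (uniform $L^1$ boundedness of $P_\Lambda$) is indeed the tacit step that lets the dispersive estimate be applied to $P_\Lambda^2 g$, and it is as routine as you say.
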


\begin{remark}
Estimate \eqref{Str2d} in the case $\beta=1$ only requires a loss slightly smaller than $1/4$ derivative close to the end point $(q,r)=(2,\infty)$. This is better than the corresponding\footnote{For high frequency, the dispersive symbol $\widetilde{p}$ of FDKP defined in \eqref{symbol:pbeta} satisfies $|\widetilde{p}(\xi)| \sim |\xi|^{\frac32}$.} Strichartz estimate for the fractional KP equation with $\alpha=\frac12$ where the loss is slightly smaller than $3/8$ derivatives (see Proposition 4.9 in \cite{LPS1}). 
\end{remark}

As an application of these Strichartz estimates, we are able to improve the standard well-posedness result $H^s(\mathbb R^2)$, $s>2$, for FDKP in the case of capillary-gravity waves ($\beta>0$).
\begin{theorem} \label{WP:theorem}
Assume that $\beta=\epsilon=1$ and $s>\frac74$.
Then, for any $u_0 \in H^s(\mathbb R^2)$, there exist a positive time $T = T(\|u_0\|_{H^s} )$ (which can be
chosen as a nonincreasing function of its argument) and a unique solution $u$ to the
IVP associated to the FDKP equation \eqref{FDKP} in the class
\begin{equation} \label{WP:theorem.1}
C([0,T] : H^s(\mathbb R^2)) \cap L^1((0,T) : W^{1,\infty}(\mathbb R^2) , 
\end{equation}
satisfying $u(\cdot,0)=u_0$.

Moreover, for any $0<T'<T$, there exists a neighborhood $\mathcal{U}$ of $u_0$ in $H^s(\mathbb R^2)$
such that the flow map data-to-solution
\begin{equation*}
 \mathcal{U} \to C([0,T'] : H^s(\mathbb R^2)), \ v_0 \mapsto v \, ,
\end{equation*}
is continuous.
\end{theorem}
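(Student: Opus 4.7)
The plan is to build the solution by a Bona--Smith regularisation/compactness argument whose a priori bound is closed in the space $X_T := L^\infty([0,T]; H^s(\R^2)) \cap L^1([0,T]; W^{1,\infty}(\R^2))$. Two ingredients are needed. First, a standard energy estimate: applying $J^s=(1-\Delta)^{s/2}$ to \eqref{FDKP1}, pairing with $J^s u$, and exploiting the skew-adjointness of $\widetilde L_1(D)$ together with the Kato--Ponce commutator bound $\|[J^s,u]\partial_{x_1}u\|_{L^2}\lesssim \|\nabla u\|_{L^\infty}\|u\|_{H^s}$ and an integration by parts for the remaining symmetric term yields
\[
\|u\|_{L^\infty_T H^s}\le \|u_0\|_{H^s}\exp\!\Big(C\!\int_0^T \|\nabla u(t)\|_{L^\infty}\,dt\Big).
\]

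The second, more subtle ingredient is a Strichartz-based bound for $\|u\|_{L^1_T W^{1,\infty}}$ via Theorem~\ref{lm-LocStr}. I would fix an admissible pair $(q,r)$ with $q\to 2^+$ and $r\to\infty$, so that the loss $(1/2-1/r)/2$ approaches $1/4$. Hölder in $t$ and Bernstein in $x$ give
\[
\|\nabla u\|_{L^1_T L^\infty_x}\lesssim T^{1-1/q}\sum_{\Lambda}\Lambda^{1+2/r}\|P_\Lambda u\|_{L^q_T L^r_x},
\]
and Duhamel together with Theorem~\ref{lm-LocStr} bound each frequency block by $\langle\Lambda\rangle^{(1/2-1/r)/2}\bigl(\|P_\Lambda u_0\|_{L^2}+\|P_\Lambda \partial_{x_1}(u^2)\|_{L^1_T L^2_x}\bigr)$. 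The homogeneous contribution sums, by Cauchy--Schwarz in $\Lambda$, to a bound by $\|u_0\|_{H^{5/4+}}$. For the nonlinear contribution I would expand $\partial_{x_1}(u^2)=2u\partial_{x_1}u$, decompose both factors dyadically, and estimate the high--high, high--low and low--high interactions separately, using 2D Bernstein (in particular $\|P_\Lambda f\|_{L^2}\lesssim\Lambda\|f\|_{L^1}$ for the high--high piece, whose output frequency lies well below the input frequencies), the embedding $H^s\hookrightarrow L^\infty$ for $s>1$, and a Littlewood--Paley square function inside the $L^q_T L^r_x$ norm via Minkowski; this should deliver
\[
\|u\|_{L^1_T W^{1,\infty}}\le C T^\theta\bigl(\|u_0\|_{H^s}+\|u\|_{L^\infty_T H^s}^{\,2}\bigr)
\]
for some $\theta>0$, provided $s>\frac74$.

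A standard continuity/bootstrap then closes the a priori estimate in $X_T$ on a short time $T=T(\|u_0\|_{H^s})$. Existence of $u$ follows by applying this uniform bound to mollifications $u_{0,\delta}=\rho_\delta\ast u_0$ (for which the cited hyperbolic LWP in $H^{s'}$, $s'>2$, provides smooth approximating solutions on a uniform time interval), extracting a weak limit via Aubin--Lions compactness, and verifying it solves \eqref{FDKP1} in $C([0,T];H^s)$. Uniqueness is obtained by an $L^2$ (or $H^{s-1}$) energy inequality for the difference $w=u_1-u_2$, which satisfies $\partial_t w+\widetilde L_1(D)w=-3\partial_{x_1}((u_1+u_2)w)$ with coefficient $u_1+u_2\in L^1_T W^{1,\infty}$, so that Gronwall forces $w\equiv 0$. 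Continuous dependence on the initial data then follows by the usual Bona--Smith splitting into low- and high-frequency parts, the former controlled by the uniqueness-type inequality and the latter by uniform mollifier bounds.

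The hard part will be closing the nonlinear Strichartz estimate at the sharp threshold $s>\frac74$. The naive product bound $\|\partial_{x_1}(u^2)\|_{H^{5/4+}}\lesssim\|u\|_{L^\infty}\|u\|_{H^{9/4+}}$ combined with the $1/4$ Strichartz loss would only close for $s>\frac94$, so a further half-derivative must be saved. I expect this saving to come from the structure of the high--high interaction via Bernstein from $L^1$ to $L^2$ (which gives a factor $\Lambda$, to be balanced against the two high input frequencies $N_1\sim N_2\gg\Lambda$), together with using the Littlewood--Paley square function inside the admissible $L^q_T L^r_x$ norm; the bookkeeping here is precisely what pins down the $\frac74$ threshold rather than $\frac94$.
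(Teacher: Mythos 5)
Your overall architecture (Kato--Ponce energy estimate, control of $\|\nabla u\|_{L^1_TL^\infty_x}$ via the localised Strichartz bounds, $L^2$--Gronwall uniqueness, regularised data plus Bona--Smith for existence and continuity) matches the paper, but the core step --- closing the Strichartz bound at $s>\frac74$ --- has a genuine gap, and you have in fact identified its location yourself. Applying Duhamel on all of $[0,T]$ and Theorem \ref{lm-LocStr} globally, the inhomogeneous term costs $\Lambda^{1+\frac2r}\langle\Lambda\rangle^{\frac12(\frac12-\frac1r)}\|P_\Lambda \partial_{x_1}(u^2)\|_{L^1_TL^2_x}$, and the dominant contribution to $P_\Lambda(u\,\partial_{x_1}u)$ is \emph{not} the high--high interaction but the low--high one, $P_\Lambda\bigl(u_{\mathrm{low}}\,\partial_{x_1}u_{\sim\Lambda}\bigr)$, whose $L^2_x$ norm is genuinely of size $\|u\|_{L^\infty_x}\,\Lambda\,\|u_{\sim\Lambda}\|_{L^2_x}$: here the derivative sits on the high-frequency factor and the output frequency is comparable to it, so neither Bernstein from $L^1$ to $L^2$ nor a square-function rearrangement inside $L^q_TL^r_x$ produces any gain. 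Summing $\Lambda^{\frac54+}\cdot\Lambda\,\|u_\Lambda\|_{L^2}$ then requires $u\in H^{\frac94+}$, so your route closes only for $s>\frac94$; the hoped-for half-derivative saving from the high--high piece addresses an interaction that was never the obstruction.

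The missing idea is the refined (frequency-dependent time-localised) Strichartz estimate of Koch--Tzvetkov/Kenig--Koenig type, which is Lemma \ref{ref:Str:lemma} in the paper: for each dyadic $\Lambda>1$ one chops $[0,T]$ into $\#J\sim \Lambda T$ subintervals $I_j$ of length $\sim\Lambda^{-1}$ and applies Theorem \ref{lm-LocStr} on each $I_j$. The Duhamel forcing then enters only through $\sum_j\bigl(\int_{I_j}\|F_\Lambda\|_{L^2_x}\,dt\bigr)^2\le \Lambda^{-1}\|F_\Lambda\|_{L^2_{T,x}}^2$, i.e. the chopping converts half a derivative of loss on the forcing into a factor $(\Lambda T)^{1/2}$ on the homogeneous part, yielding
\begin{equation*}
\|\nabla w_\Lambda\|_{L^2_TL^\infty_x}\lesssim T^{\frac12}\,\Lambda^{\frac74+\theta}\|w_\Lambda\|_{L^\infty_TL^2_x}+\Lambda^{\frac34+\theta}\|F_\Lambda\|_{L^2_{T,x}} \, .
\end{equation*}
With this distribution of losses the nonlinearity is handled trivially by $\|J^{s-1}\partial_{x_1}(u^2)\|_{L^2_{T,x}}\lesssim T^{\frac12}\|u\|_{L^\infty_TH^s}^2$ (algebra property, $s>1$), and the threshold $s>\frac74$ is dictated by the homogeneous term, which the energy norm absorbs. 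Without this time-interval decomposition (or some substitute mechanism, which your proposal does not supply), the a priori estimate cannot be closed below $s>\frac94$, and the rest of your scheme, though correct in outline, does not yield the stated theorem.
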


We now comment on the main ingredients in the proof of Theorem \ref{WP:theorem}. A standard energy estimate combined with the Kato-Ponce commutator estimate yields 
\begin{equation} \label{EE}
\sup_{t \in [0,T]}\|u(\cdot,t)\|_{H^s_x}^2 \le \|u(0)\|_{H^s}^2+c\left(\int_0^T\|\nabla u(\cdot,t)\|_{L^{\infty}_x}dt\right)\sup_{t \in [0,T]}\|u(\cdot,t)\|_{H^s_x}^2 , \quad s>0 \, .
\end{equation}
Therefore, the main difficulty is to control the term $\|\nabla u\|_{L^1_TL^{\infty}_x}$. This can be done easily using the Sobolev embedding at the \lq\lq hyperbolic\rq\rq \, regularity $s>2$. To lower this threshold, we use a refined Strichartz estimate on the linear non-homogeneous version of \eqref{FDKP} (see Lemma \ref{ref:Str:lemma}). More precisely, after performing a Littlewood-Paley decomposition on the function $u$, we chop the time interval $[0,T]$ into small intervals whose size is inversely proportional to the frequency of the Littlewood-Paley projector. Then, we apply our frequency localised Strichartz estimate (Theorem \ref{lm-LocStr}) to each of these pieces and sum up to get the result. This estimate allows to control   $\|\nabla u\|_{L^1_TL^{\infty}_x}$ at the regularity level $s>\frac74$. Note that similar estimates have already been used for nonlinear dispersive equations (see for instance to \cite{BC,Ta,BGT,KoTz,KeKo,Ke,LPS2,LPS1,HLRRW}).

This estimate combined with the energy estimate \eqref{EE} provides an \emph{a priori} bound on smooth solutions of \eqref{FDKP}. The existence of solutions in Theorem \ref{WP:theorem} is then deduced by using compactness methods, while the uniqueness follows from an energy estimate for the difference of two solutions in $L^2$ combined with Gronwall's inequality. Finally, to prove the persistence property and the continuity of the flow, we use the Bona-Smith argument. 

\medskip    
The paper is organized as follows: in Section 2, we prove the sharp asymptotics for the asymmetric Bessel functions, which will be used to prove Theorem \ref{lm-dispest} and \ref{lm-LocStr} in Section 3. Section 4 is devoted to the proof of the local well-posedness result. Finally, we derive some useful estimates on the derivatives of the symbol $m_{\beta}$ in the appendix.

\medskip
\noindent \textbf{Notation}. 
For any positive numbers $a$ and $b$, the notation $a\lesssim b$ stands for $a\le cb$, where $c$ is a positive constant that may change from line to line. Moreover, we denote $a \sim b$  when  $a \lesssim b$ and $b \lesssim a$.

We also set $\angles{\xi}:= \left(1 +|\xi|^2\right)^\frac12.$

For $x=(x_1,x_2)$, $u=u(x)\in \mathcal{S}'(\R^2)$, $\mathcal{F} u=\hat{u}$ will denote the Fourier transform of $u$. For $s\in\R$, we define the Bessel potential of order $-s$, $J^s$ by
$$
J^su = \mathcal{F}^{-1}(\angles{\xi}^s\mathcal{F}u) \, .
$$

Throughout the paper, we fix a smooth cutoff function $\chi$ such that
\begin{equation}\label{eta}
\chi \in C_0^{\infty}(\mathbb R), \quad 0 \le \chi \le 1, \quad
\chi_{|_{[-1,1]}}=1 \quad \mbox{and} \quad  \mbox{supp}(\chi)
\subset [-2,2].
\end{equation}
 We set
$$
\rho(s)
=\chi\left(s\right)-\chi \left(2s\right).
 $$
 Thus, $\supp \rho= 
\{ s\in \R: 1/ 2 \le |s| \le 2 \}$. For 
  $\Lambda \in  2^\Z$ we set $\rho_{\Lambda}(s):=\rho\left(s/\Lambda\right)$
 and define the frequency projection $P_\Lambda$ by
\begin{align*}
\widehat{P_{\Lambda} f}(\xi)  = \rho_\Lambda(|\xi|)\widehat { f}(\xi) .
 \end{align*}
 Any summations over capitalized variables such as $\Lambda$ or $\Gamma$ are presumed to be over dyadic numbers. We also define
 \[ P_{\le \Lambda} =\sum_{\Gamma \le \Lambda} P_{\Gamma} \quad \text{and} \quad P_{>\Lambda}=1-P_{\le \Lambda}=\sum_{\Gamma>\Lambda}P_{\Gamma} \, .\]
We sometimes write $f_\Lambda:=P_\Lambda f $, so that
\[ f=\sum_{\Lambda \in 2^\Z } f_\Lambda = P_{\le 1}f+\sum_{\Lambda > 1}f_{\Lambda} .\]
 
For $1\le p\le\infty$, $L^p(\mathbb R^2)$ denotes the usual Lebesgue space and for $s\in\R$, $H^s(\mathbb R^2)$ is the $L^2$-based Sobolev space with norm $\|f\|_{H^s}=\|J^s f\|_{L^2}$.
If $B$ is a space of functions on $\R^2$, $T>0$ and $1\le p\le\infty$, we define the spaces $L^p\big((0,T) : B \big)$ and $L^p\big( \mathbb R : B\big)$ respectively through the norms
$$
\|f\|_{L^p_TB_x} = \left( \int_0^T \|f(\cdot,t)\|_{B}^p dt \right)^{\frac1p} \quad  \textrm{and} \quad \|f\|_{L^p_tB_x} = \left( \int_{\mathbb R} \|f(\cdot,t)\|_{B}^p dt \right)^{\frac1p} \, ,
$$
when $1 \le p < \infty$, with the usual modifications when $p=+\infty$.

\section{ Identities and decay for the asymmetric Bessel functions}

\begin{proposition}[Identities and decay for $J_+$] \label{propJ+}
Let $x=(x_1, x_2)$, $s_1=\sgn(x_1)$ and  $x'_2=x_2 /|x|$. Define
\begin{align*}
J_+( x) &= \int_{-\pi/2}^{\pi/2}  e^{i x \cdot  \omega(\theta) }  \, d\theta,
\end{align*}
where $\omega (\theta)=(\cos \theta, \sin  \theta)$.
Then we have the following:
\begin{enumerate}
[(i).] 
\item \label{prop-J+ident}
$J_+$ can be written as
\begin{equation}\label{J+ident}
J_+( x)=
 F(| x|, | x'_2|) + F^{s_1}(|x|, | x'_2| ),
\end{equation}
where
 \begin{align*}
F(r, a )
&= \int_{-a}^{a}  e^{ir  s }  \frac{ds}{\sqrt{1-s^2}} , \\
F^{\pm} (r, a )
&= 2\int_{a}^{1}  e^{\pm i  r  s}  \frac{ds}{\sqrt{1-s^2}} 
\end{align*}
for $a\in [0,1]$.
\item  \label{prop-F-ident}The functions
  $F$ and $  F^\pm$ can be written as
\begin{align}\label{F-ident}
F(r,  a ) & = e^{  ia r} f^+_a (r) +  e^{ - ia  r}  f^-_a (r)  ,
\\
\label{Fpm-ident}
  F^{\pm}  (r, a ) & =   2 e^{ \pm i  r} f^{\pm} _1(r) - 2e^{  \pm  i a   r} f^{\pm}  _a(r),
\end{align}
where
\begin{equation}
\label{fa}
\begin{split}
f^\pm _{a}(r)&=  \mp i \int_{0}^{\infty}  e^{-rs} \left(s^2+1-a^2 \mp  2ais \right)^{-1/2} \, ds.
\end{split}
\end{equation}

\item  \label{prop-fa-decay}  Moreover, the functions $ f^\pm_a $ and their derivatives satisfy the decay estimates
\begin{equation}\label{fa-est}
 \Bigabs{\partial_r^j f^\pm_a(r)} \le C r^{-j-1/2} \qquad (j=0, 1) 
\end{equation}
for all $r\ge 1$ and $a\in [0,1]$.

\end{enumerate}

\end{proposition}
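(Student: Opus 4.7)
The plan is to address each of the three claims in turn.

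For (i), I pass to polar coordinates. Writing $x = |x|(\cos\alpha,\sin\alpha)$ and substituting $\phi = \theta - \alpha$ in the definition of $J_+$ yields
\[
J_+(x) = \int_{-\pi/2-\alpha}^{\pi/2-\alpha} e^{i|x|\cos\phi}\,d\phi.
\]
Using $2\pi$-periodicity to position the interval canonically according to $\sgn(x_1)$, I split it at an interior point so that one subinterval is symmetric about a critical point of $\cos\phi$ and the other is symmetric about a zero of $\cos\phi$. Applying the substitution $s = \cos\phi$ on each piece (with $d\phi = \pm ds/\sqrt{1-s^2}$ according to the sign of $\sin\phi$), the first piece contributes $F^{s_1}(|x|,|x'_2|)$ and the second contributes $F(|x|,|x'_2|)$. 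A short case analysis across the four quadrants, using evenness of $F$ in $r$, consolidates into \eqref{J+ident}.

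For (ii), I deform into the upper half plane using the holomorphic extension of $1/\sqrt{1-s^2}$ with branch cuts on $(-\infty,-1]\cup[1,\infty)$. For $F(r,a)$, I apply Cauchy's theorem to the rectangle with corners $\pm a$ and $\pm a+iR$ and let $R\to\infty$; the top side vanishes since $|e^{irs}| = e^{-r\im s}$, and the two vertical sides parameterized by $s = \pm a + it$ yield $e^{\pm iar}f_a^{\mp}(r)$ after using $1-(\pm a+it)^2 = t^2+1-a^2 \mp 2iat$. For $F^+(r,a)$ I use the rectangle with corners $a,\,1,\,1+iR,\,a+iR$; the subtle issue is the branch point at the corner $s=1$, but since $1/\sqrt{1-s^2}$ is square-root integrable there, a quarter-circle indentation of radius $\delta$ around $s=1$ contributes $O(\sqrt{\delta})$ and vanishes. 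The case of $F^-$ is handled symmetrically in the lower half plane.

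For (iii), the decisive observation is the completion of squares
\[
\phi_a^{\pm}(s) := s^2+1-a^2 \mp 2ias \;=\; (s\mp ia)^2+1 \;=\; \bigl(s\mp i(a+1)\bigr)\bigl(s\pm i(1-a)\bigr),
\]
which gives the clean modulus formula
\[
|\phi_a^{\pm}(s)| = \sqrt{s^2+(1-a)^2}\,\sqrt{s^2+(1+a)^2} \;\ge\; s
\]
for $s\ge 0$ and $a\in[0,1]$ (using $1+a\ge 1$ and $s^2+(1-a)^2\ge s^2$). Differentiation under the integral sign, justified by dominated convergence, pulls down an extra factor $s^j$, so
\[
\bigabs{\partial_r^j f_a^{\pm}(r)} \le \int_0^\infty s^j e^{-rs}\,\bigabs{\phi_a^{\pm}(s)}^{-1/2}\,ds \le \int_0^\infty s^{j-\frac12} e^{-rs}\,ds = \Gamma\!\bigl(j+\tfrac12\bigr)\,r^{-j-\frac12},
\]
giving \eqref{fa-est} uniformly in $a\in[0,1]$ (and in fact for every $r>0$).

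The main obstacle I foresee is ensuring part (ii) is rigorous at the branch point $s=1$: one must show carefully that the quarter-circle indentation disappears and that the deformed integrand reproduces the correct branch of $\sqrt{\,\cdot\,}$ at the starting point. Beyond this the argument is a routine exercise in contour manipulation, with the fortunate completion of squares doing the main work in part (iii).
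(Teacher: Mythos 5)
Your proposal is correct, and parts (i) and (ii) follow the same route as the paper: polar coordinates and the substitution $s=\cos\phi$ for (i), and Cauchy's theorem on indented rectangles for (ii), with only cosmetic differences in which corners get a quarter-circle indentation (the paper indents at $\pm a$ in the $F$ case as well, presumably to treat $a=1$ uniformly, though this is harmless for $a<1$).

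Part (iii) is genuinely different and, in my view, cleaner than the paper's argument. The paper bounds the modulus by $\bigl(\max(s^2+1-a^2,\,2as)\bigr)^{-1/2}$ and then splits into the two regimes $0\le a\le 1/\sqrt2$ and $1/\sqrt2\le a\le 1$, handling each with a separate elementary estimate of the resulting integral. Your factorization
\begin{equation*}
s^2+1-a^2\mp 2ias=(s\mp ia)^2+1=\bigl(s\mp i(a+1)\bigr)\bigl(s\pm i(1-a)\bigr),
\end{equation*}
hence $\bigl| s^2+1-a^2\mp 2ias\bigr|=\sqrt{s^2+(1-a)^2}\,\sqrt{s^2+(1+a)^2}\ge s$, collapses the whole case analysis into the single uniform bound
\begin{equation*}
\bigabs{\partial_r^j f_a^\pm(r)}\le\int_0^\infty s^{j-1/2}e^{-rs}\,ds=\Gamma\!\bigl(j+\tfrac12\bigr)r^{-j-1/2},
\end{equation*}
with an explicit constant and validity for all $r>0$, not merely $r\ge 1$. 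This buys brevity and uniformity at essentially no cost; the paper's version is more elementary but requires more bookkeeping. Both are correct.
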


The proof of Proposition \ref{propJ+} is given in the following subsections.

\subsection{Proof of Proposition \ref{propJ+} \eqref{prop-J+ident} }
Writing $x=|x|\omega (\alpha)$, where $\alpha  \in [0, 2\pi)$, we have
\begin{align*}
J_+(x) & = \int_{-\pi/2}^{\pi/2}  e^{i |x|  \omega(\alpha)  \cdot \omega(\theta) }  \, d\theta
= \int_{-\pi/2}^{\pi/2}  e^{i |x|  \cos (\theta-\alpha) }  \, d\theta
\\
&= \int_{\alpha-\pi/2}^{\alpha+\pi /2}  e^{i|x|  \cos (\theta) }  \, d\theta.
\end{align*}
We shall use the following change of variables:
$$s=\cos \theta    \ \Rightarrow \sin \theta = \pm \sqrt{1-s^2} ,  \ ds= -\sin \theta  d\theta.
$$
Now if $\alpha \in [0, \pi/2]$, i.e., $x_1\ge 0$ and $x_2\ge 0$, we write 
\begin{align*}
J_+( x) & =  \left( \int_{\alpha-\pi/2}^{0}  + \int_{0}^{\alpha+\pi/2}  \right)   e^{i|x|  \cos (\theta) }  \, d\theta
\\
&= \left(\int_{\sin\alpha}^{1} + \int_{-\sin\alpha}^{1} \right)  e^{i  |x|   s}  \frac{ds}{\sqrt{1-s^2}} 
\\
&=\left(\int_{-\sin\alpha}^{\sin\alpha} + 2 \int_{\sin\alpha}^{1} \right)  e^{i  |x|   s }  \frac{ds}{\sqrt{1-s^2}} 
\\
&= F(|x|,  |x'_2|) +  F^+ (|x|, |x'_2| ) ,
\end{align*}
where we used the fact that $x_2'=x_2/|x|=\sin \alpha >0$ and $s_1=\sgn(x_1)=+$.

If $\alpha \in [ \pi/2, \pi)$, i.e., $x_1\le 0$ and $x_2> 0$,  we split the integral over $[\alpha-\pi/2, \pi]$  and  $[\pi, \alpha+\pi/2]$, and write
\begin{align*}
J_+( x) 
&= \left(  \int_{-\sin \alpha}^{\sin \alpha} + 2 \int_{-1}^{-\sin \alpha} \right)  e^{i    |x| s}  \frac{ds}{\sqrt{1-s^2}}\
\\
&= F(|x|, | x'_2|) + F^-(|x|, |x'_2| )  .
\end{align*}
The remaining cases can be established similarly. In fact,  if $\alpha \in [ \pi, 3\pi/2)$, i.e., $x_1< 0$ and $x_2\le 0$, we split the integral over $[\alpha-\pi/2, \pi]$ and  $[\pi, \alpha+\pi/2]$ whereas if $\alpha \in [  3\pi/2, 2\pi)$, i.e., $x_1\ge 0$ and $x_2< 0$,  we split the integral over $[\alpha-\pi/2, 2\pi]$  and  $[2\pi, \alpha+\pi/2]$ to obtain the desired identities.

\subsection{Proof of Proposition \ref{propJ+} \eqref{prop-F-ident} }We follow \cite[Chapter 4, Lemma 3.11]{SW71}.
For fixed $0<\delta \ll 1$ and $R\gg 1$, let $\Omega_{\delta}(a,  R)$ be the region in the complex plane obtained from the rectangle with vertices at points
$(-a,0)$,  $(a,0)$, $(a, R)$ and $(-a, R)$, by removing two quarter circles 
 of radius $\delta$ and centered at $(a, 0)$ and $(-a, 0)$, denoted $C_\delta(a)$ and $C_\delta(-a)$, respectively; see fig. 1 below.

The functions
 $$
 h^\pm(z)=e^{\pm irz}(1-z^2)^{-1/2}
 $$ 
 have no poles in 
$\Omega_{\delta}.$
So by Cauchy's theorem we have
\begin{align*}
0 &= \int_{ \partial \Omega_{\delta} }h^+(z) \, dz 
\\
&= \int_{-a+\delta}^{a-\delta}  h^+(s) \, ds
 +  i \int_{\delta}^{R}  h^+(a+is)  \, ds -  i \int_{\delta}^{R}  h^+(-a+is)  \, ds 
+\mathcal E_\delta(a, R),
\end{align*}
 where
 $$\mathcal E_\delta(a, R) =   \int_{C_\delta(a)}  h^+ (z)\, dz +   \int_{C_\delta(-a)}  h^+ (z)\, dz -  \int_{-a}^{a}  h^+ (s+iR)\, ds  .
 $$
 Now
 letting $\delta \rightarrow 0$ and  $R \rightarrow \infty$, one can show that $\mathcal E(\delta, R)  \rightarrow 0$, and hence 
\begin{align*}
F(r, a)= \int_{-a}^{a}  e^{irs}(1-s^2)^{-1/2} \, ds
 &= i e^{-air}  \int_{0}^{\infty}  e^{-rs} \left(s^2+1-a^2+ 2ais \right)^{-1/2} \, ds
 \\
 \qquad &
-i  e^{iar}   \int_{0}^{\infty}  e^{-rs} \left(s^2+1-a^2 - 2ais \right)^{-1/2} \, ds
\\
&=e^{iar} f^+ _a(r)+   e^{-iar} f^- _a(r)
\end{align*}
which proves \eqref{F-ident}.

The identity \eqref{Fpm-ident} is proved in a similar way.
Indeed,
let $\Omega^\pm_{\delta}(a,  R)$  
be the region in the complex plane obtained from the rectangle with vertices at points $(a,0)$,  $(1,0)$, $(1, \pm R)$ and $(a, \pm R)$, by removing the quarter circle 
 of radius $\delta$ and centered at $(1, 0)$, denoted $C_\delta$; see fig. 2 below.

Again, the functions $h^\pm( z)$ have no poles in 
$
\Omega^\pm_{\delta}.
$
So by Cauchy's theorem we have
\begin{align*}
0 &= \int_{  \partial \Omega^+ _{\delta} } h^+ (z)\, dz 
\\
&=  \int_{a}^{1-\delta} h^+(s) \, ds
 +  i \int_{\delta}^{R}  h^+ (1+is)\, ds 
- i \int_{0}^{R}  h^+(a+is) \, ds +\mathcal E_\delta^+(a, R),
\end{align*}
 where 
 $$\mathcal E_\delta^+(a, R) =   \int_{C_\delta}  h^+ (z)\, dz -  \int_{a}^{1}  h^+ (s+iR)\, ds  .$$
 Letting $\delta \rightarrow 0$ and  $R \rightarrow \infty$, one can show that $ \mathcal E_\delta^+(a, R)  \rightarrow 0$, and hence 
\begin{align*}
 F^+(r, a)&=2 \int_{a}^{1}  e^{irs}(1-s^2)^{-1/2} \, ds
\\
 &= 2i e^{air}  \int_{0}^{\infty}  e^{-rs} \left(s^2+1-a^2- 2ais \right)^{-1/2} \, ds 
  \\ 
  & \qquad   \qquad  -2i  e^{ir}   \int_{0}^{\infty}  e^{-rs} \left(s^2- 2is \right)^{-1/2} \, ds
\\
&=  2e^{ ir} f^+_1(r) -   2e^{ air} f^+_ a (r)  .
\end{align*}
Similarly, integrating $h^-(z)$ over 
$
\partial \Omega^-_{\delta},
$
one can show
\begin{align*}
 F^-(r, a)
&=   2e^{- ir} f^-_1(r) -2e^{ -air} f^-_ a (r)    .
\end{align*}

\begin{figure}\label{fig1}
\begin{center}
\begin{tikzpicture}[scale=1]
\draw[thick, ->] (-4,0) -- (4,0) node[right, dashed] {$\re z$};
\draw[thick, ->] (0,5) -- (0,6) node[right, dashed] {$\im z$};

\draw [thick, -> ]  (-2, 0)--(0,0);
\draw [thick, -> ]  (3, 2)--(3, 2.5);
 \draw [thick, -> ]  (2, 5)--(1, 5);
 \draw [thick, -> ]  (-3, 2.5)--(-3, 2);
\draw [thick, -> ]  (0, 5)--(-.5, 5);

\node at (0,2.5){$\Omega_\delta$};

\node at (0,5){$\bullet$};

\node at (-3,0){$\bullet$};
\node at (-3, -.25){$-a$};
\node at (3,0){$\bullet$};
\node at (3, -.25){$a$};
\node at (0, -.25){$0$};
  \coordinate(A) at (2, 0);
  \draw (A) arc[start angle=180, end angle=90, radius=1];
\node at (2,-.25){$a-\delta$};
\node at (2,0){$\bullet$};
  \coordinate(B) at (-2, 0);
  \draw (B) arc[start angle=0, end angle=90, radius=1];
\node at (-2,-.25){$-a+\delta$};
\node at (-2,0){$\bullet$};
\draw[ color=black] (-3,1) -- (-3,5);
\draw[ color=black] (3,1) -- (3,5);
\draw[ color=black] (-3,5) -- (3,5);
\node at (.25, 5.25){$R$};
\end{tikzpicture}
\end{center}
\caption{The region $\Omega_\delta(a,R)$ with $0\le a\le 1$, $0<\delta\ll1 $ and $R\gg1 $.}
\end{figure}
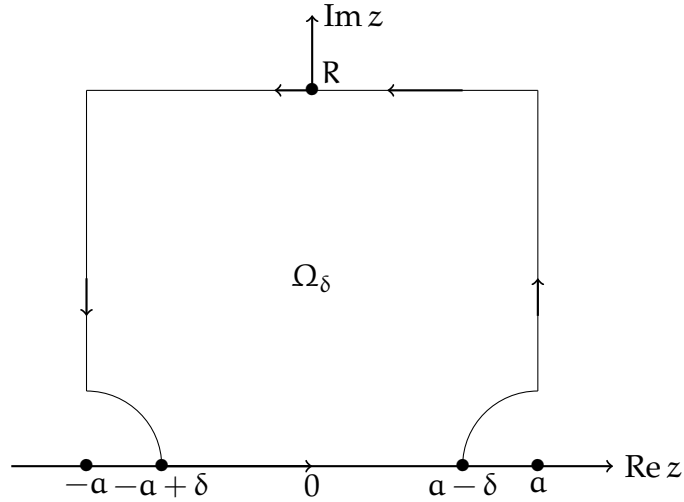

\begin{figure}\label{fig2}
\begin{center}
\begin{tikzpicture}[scale=1]
\draw[->] (0,0) -- (6,0) node[right, dashed] {$\re z$};
\draw[->] (0,-3.25) -- (0,4) node[right, dashed] {$\im z$};
\node at (2.5,-.2){$1-\delta$};

 \draw [thick, -> ]  (1.5, 0)--(1.7, 0);
 \draw [thick, -> ]  (4, 2)--(4, 2.2);
 \draw [thick, -> ]  (3, 3)--(2.5, 3);
 \draw [thick, -> ]  (1, 2)--(1, 1.5);

 \draw [thick, -> ]  (4, -1)--(4, -1.5);
 \draw [thick, -> ]  (3, -3)--(2.5,-3);
 \draw [thick, -> ]  (1, -1.5)--(1, -1);

 \draw [thick, -> ]  (3.05, .3)--(3.1, .5);
 \draw [thick, -> ]  (3.05, -.3)--(3.1, -.5);

\node at (3,0){$\bullet$};
\node at (-.25, -.25){$0$};
\node at (4,0){$\bullet$};
\node at (4,-.25){$1$};
\draw[thick, color=black] (1,-3) -- (1,3);
\draw[thick, color=black] (4,1) -- (4, 3);
\draw[thick, color=black] (4,-1) -- (4, -3);
\draw[thick, color=black] (1,-3) -- (4,-3);
\draw[thick, color=black] (1,3) -- (4,3);

  \coordinate(A) at (3, 0);
  \draw (A) arc[start angle=180, end angle=90, radius=1];
    \draw (A) arc[start angle=180, end angle=270, radius=1];

\node at (3,1.5){$\Omega^+_\delta$};
\node at (3,-1.5){$\Omega^-_\delta$};

\node at (.75,-.25){$a$};
\node at (1,0){$\bullet$};
\node at (-.3, 3){$R$};
\node at (0,3){$\bullet$};
\node at (0,-3){$\bullet$};

\node at (-.37, -3){$-R$};

\end{tikzpicture}
\end{center}
\caption{The regions $\Omega^+_\delta(a,R)$ and $\Omega^-_\delta(a,R)$ with $0\le a\le 1$, $0<\delta\ll1 $ and $R\gg1 $.}
\end{figure}
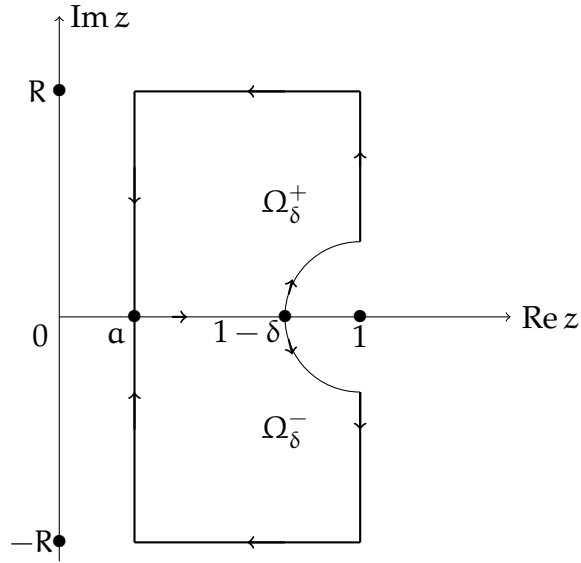

\subsection{Proof of Proposition \ref{propJ+} \eqref{prop-fa-decay} }
Observe that the following estimate holds for all $s\ge 0$ and $0\le a\le 1$:
\begin{equation*}
\Bigabs{\left( s^2-a^2+1 \pm 2ais \right)^{-\frac12} } \le \left ( \max( s^2+1-a^2 ,\  2as)\right)^{-\frac12}.
\end{equation*}
We treat the cases $0 \le a \le 1/\sqrt{2}$ and $ 1/\sqrt{2} \le a \le 1$ separately.
 
\textbf{Case 1: $0 \le a \le 1/\sqrt{2}$.} In this case 
we have
$
 2as\le   s^2+1-a^2$, and hence
\begin{align*}
|f^\pm _a (r)| &\le    \int_{0}^{\infty}  e^{-rs} \left ( s^2+1-a^2\right)^{-\frac12} \, ds
\\
& \lesssim \frac 1{\sqrt{1-a^2} } \int_{0}^{\sqrt{1-a^2} }   e^{-rs} \, ds +   \int_{\sqrt{1-a^2}}^{\infty} e^{-rs} s^{-1} \, ds
\end{align*}
Now using the fact that $1/2\le 1-a^2 \le 1$ and $r \ge 1$, we bound the first integral on the right by
$$\frac{ 1 }{\sqrt{2} }  \int_{0}^{1 }   e^{-rs} \, ds= \frac{ 1-e^{-r} }{\sqrt{2} r}
\sim r^{-1}.$$  Similarly, the second integral on the right is bounded by 
$$
 \frac 1{\sqrt{2} } \int_{1/\sqrt2}^{\infty }   e^{-rs} \, ds \lesssim  r^{-1 }  e^{-r/\sqrt{2}}.
$$
Thus, $|f^\pm_a (r)| \lesssim  r^{-1 }$.

For the derivative, we simply estimate
\begin{align*}
|\partial_r f^\pm_a (r)| &\le    \int_{0}^{\infty}  e^{-rs} s \left ( s^2+1-a^2\right)^{-\frac12} \, ds
\\
& \lesssim \frac1{\sqrt{1-a^2} } \int_{0}^{\sqrt{1-a^2} }   e^{-rs} s \, ds +   \int_{\sqrt{1-a^2}}^{\infty} e^{-rs}   \, ds 
\\
& \lesssim r^{-2} +  r^{-1} \int_{r/\sqrt{2} }^{\infty} e^{-s}  \, ds 
\\
& \lesssim r^{-2} +   r^{-1 }  e^{-r/\sqrt{2}}  
\lesssim r^{-2} .
\end{align*}

\textbf{Case 2: $  1/\sqrt{2} \le a \le 1$.
} We split 
\begin{align*}
f^\pm_a (r)&=   i\left(\int_{0}^{a}  +   \int_{a}^{\infty}\right) e^{-rs} \left(s^2+1-a^2\mp  2ais \right)^{-1/2} \, ds.
\end{align*}
Then the first integral on the right-hand side is bounded by 
\begin{align*}
  \int_{0}^{a}  e^{-rs} (2as)^{-\frac12} \, ds& =\frac{1}{\sqrt{2ar}} \int_{0}^{ar }  e^{-s} s^{-\frac 12} \, ds
\\
& \le \frac{1}{\sqrt{2ar}} \left( \int_{0}^{a } s^{-\frac 12} \, ds + a^{-\frac12} \int_{a}^{ar}   e^{-s} \, ds \right) \lesssim r^{-\frac12}
\end{align*}
and the second integral on the right is bounded by 
\begin{align*}
  \int_{a}^{\infty}  e^{-rs} \left ( s^2+1-a^2\right)^{-\frac12} \, ds & \le \int_{a}^{\infty}  e^{-rs} s^{-1} \, ds
\\
& =   \int_{ar}^{\infty}  e^{-s}  s^{-1} \, ds \le (ar)^{-1 }  e^{-ar} \lesssim r^{-1}.
\end{align*}
Thus, $| f^\pm_a (r) | \lesssim r^{-\frac12} .$

For the derivative, we simply estimate
\begin{align*}
| \partial_r f^\pm_a (r)  | \le   \int_{0}^{\infty}  e^{-rs} s (2as)^{-\frac12} \, ds& =\frac{r^{-\frac32}}{\sqrt{2a}} \int_{0}^{\infty }  e^{-s} s^{\frac 12} \, ds
&
\\
\lesssim r^{-\frac32} \int_{0}^{\infty }  e^{-s/2}  \, ds \lesssim  r^{-\frac32}.
\end{align*}

\section{localised dispersive and Strichartz estimates}
 
Van der Corput's Lemma will be useful in the proof of Theorem \ref{lm-dispest}.
\begin{lemma}[Van der Corput's Lemma, \cite{S93} ]\label{lm-corput}
 Assume 
$g \in C^1(a, b)$, $\psi\in  C^2(a, b)$ and $|\psi''(r)|  \geq A$ for all $r\in (a, b)$. Then 
\begin{align}
\label{corput} 
\Bigabs{\int_a^b e^{i t  \psi(r)}   g(r) \, dr}& \le C  (At)^{-1/2}  \left[ |g(b)| + \int_a^b |g'(r)| \, dr \right] ,
\end{align}
for some constant $C>0$ that is independent of $a$, $b$ and $t$.
\end{lemma}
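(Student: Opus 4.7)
The plan is to follow the classical two-step strategy: first establish the special case $g \equiv 1$ with bound $(At)^{-1/2}$, then bootstrap to general $g$ by integration by parts.

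Step one: the case $g\equiv 1$. Because $\psi''$ does not vanish on $(a,b)$, it has constant sign there, so $\psi'$ is strictly monotonic. Consequently there is at most one point $c\in[a,b]$ at which $|\psi'|$ attains its minimum; take $c$ to be the zero of $\psi'$ if it exists in the interval, otherwise take $c$ to be the endpoint where $|\psi'|$ is smallest. For a parameter $\delta>0$ to be chosen, split
\[
\int_a^b e^{it\psi(r)}\,dr = \int_{|r-c|\le\delta} e^{it\psi(r)}\,dr + \int_{|r-c|>\delta,\, r\in(a,b)} e^{it\psi(r)}\,dr.
\]
The first piece is bounded trivially by $2\delta$. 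For the second, the mean value theorem together with $|\psi''|\ge A$ gives $|\psi'(r)|\ge A|r-c|\ge A\delta$ on the domain of integration. Writing $e^{it\psi(r)}=\tfrac{1}{it\psi'(r)}\tfrac{d}{dr}e^{it\psi(r)}$ and integrating by parts on each of the (at most two) subintervals of $\{|r-c|>\delta\}\cap(a,b)$, one obtains bounds involving boundary terms of size $\lesssim (tA\delta)^{-1}$ and an integral term
\[
\int \frac{|\psi''(r)|}{t\,\psi'(r)^2}\,dr,
\]
which, using the substitution $u=\psi'(r)$ and the monotonicity of $\psi'$, is itself $\lesssim (tA\delta)^{-1}$. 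Choosing $\delta=(tA)^{-1/2}$ balances the two contributions and yields $\bigl|\int_a^b e^{it\psi(r)}\,dr\bigr|\le C(tA)^{-1/2}$.

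Step two: general $g\in C^1(a,b)$. Define the antiderivative $F(r)=\int_a^r e^{it\psi(s)}\,ds$, which by the previous step satisfies $\|F\|_{L^\infty(a,b)}\le C(tA)^{-1/2}$ uniformly in $r$. Integration by parts gives
\[
\int_a^b e^{it\psi(r)} g(r)\,dr = g(b)F(b) - \int_a^b F(r)\,g'(r)\,dr,
\]
and taking absolute values produces the claimed estimate
\[
\Bigl|\int_a^b e^{it\psi(r)} g(r)\,dr\Bigr| \le C(tA)^{-1/2}\Bigl[|g(b)| + \int_a^b |g'(r)|\,dr\Bigr].
\]

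The only genuinely delicate point is step one, specifically keeping track of the various subcases (whether $\psi'$ vanishes in $(a,b)$ or not, and whether the truncation $\{|r-c|>\delta\}\cap(a,b)$ gives one or two subintervals) so that the integration-by-parts argument produces constants independent of $a$, $b$, and $t$. Once the sublevel set $\{|r-c|\le\delta\}$ is isolated and $|\psi'|\ge A\delta$ is exploited on its complement via the substitution $u=\psi'(r)$, the rest is a direct optimization in $\delta$.
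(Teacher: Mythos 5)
Your proof is correct and is precisely the classical argument from Stein's book \cite{S93}, which the paper simply cites for this lemma rather than reproving it: the two-step scheme (first $g\equiv 1$ by isolating the near-stationary set $\{|r-c|\le\delta\}$, using $|\psi'(r)|\ge A|r-c|$ on its complement, and optimizing $\delta=(tA)^{-1/2}$; then passing to general $g$ by integrating by parts against the antiderivative $F$) is the standard proof of the quoted statement. Nothing further is needed.
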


\begin{proof}[Proof of Theorem \ref{lm-dispest}] We may assume $t>0$. Now 
 recalling the definition of $S_{m_\beta}$ in \eqref{lin:KPFD}--\eqref{def:m_beta}, we can write
 \begin{align*}
\left[ S_{m_{\beta}} (t) f_\Lambda \right](x) & = ( I_{\Lambda, t} \ast f)(x),
\end{align*}
where $ I_{\Lambda, t}$ is as in \eqref{I-lamb}-\eqref{asymbesselfunc}, i.e., 
$$
  I_{\Lambda, t} (x)=\Lambda^2  \int_0^\infty\left[ e^{it m_\beta( \Lambda r)}J_+(\Lambda rx) +  e^{-itm_\beta( \Lambda r)}J_-( \Lambda r x)  \right]\tilde \rho(r)   \, dr \, ,
 $$
with $\tilde \rho(r) =r \rho(r) $ and
\begin{equation*}
J_\pm(x) = \int_{\omega \in S^1} \mathbb {1}_{\{\pm \omega_1>0\}}  e^{i   x \cdot \omega}  \, d\sigma(\omega) .
\end{equation*}

By Young's inequality
\begin{equation}\label{younginq}
\| S_{m_{\beta}} (t) f_\Lambda  \|_{L^\infty_x(\R^2)} \le \| I_{\Lambda, t} \|_{L^\infty_x(\R^2)}  \|f\|_{L_x^1(\R^2)}
\end{equation}
Thus, \eqref{dispest} reduces to
\begin{align} 
\label{Idispest}
\norm{I_{\Lambda, t} }_{L^\infty_x(\R^2)} &\lesssim   \angles{ \sqrt{  \beta} \Lambda }^{-1} \angles{ \Lambda }^{\frac32}t^{-1 }   .
\end{align}

Using the parametrization $ \omega(\theta)=(\cos \theta, \sin  \theta)$, we can write 
\begin{align*}
J_+( x) &= \int_{-\pi/2}^{\pi/2}  e^{i x \cdot  \omega(\theta) }  \, d\theta
\end{align*}
and 
\begin{align*}
J_-( x) &= \int_{\pi/2}^{3\pi/2}  e^{i x \cdot  \omega(\theta) }  \, d\theta=\int_{-\pi/2}^{\pi/2}  e^{-i x \cdot  \omega(\theta)}  \, d\theta= \overline{ J_+(x)}.
\end{align*}
Thus,
\begin{equation*}
I_{\Lambda, t} (x) = 2 \Lambda^{2}   \re  \widetilde I_{\Lambda, t} (x) , 
\end{equation*}
where
\begin{equation}
\label{Itildedef1}
 \widetilde I_{\Lambda, t} (x) 
= \int_{1/2}^2  e^{itm_\beta (\Lambda r)}J_+(\Lambda r x)  \tilde \rho(r)   \, dr.
\end{equation}
So \eqref{Idispest} reduces to proving
\begin{align} 
\label{Itdispest}
\norm{ \widetilde I_{\Lambda, t} }_{L^\infty_x(\R^2)} &\lesssim  \Lambda^{-2} \angles{ \sqrt{  \beta} \Lambda }^{-1} \angles{ \Lambda }^{\frac32}  t^{-1 }  .
\end{align}

We treat the cases $  |x|\lesssim \Lambda^{-1}$ and $  |x|\gg   \Lambda^{-1}$ separately. 
First assume $  |x|\lesssim   \Lambda^{-1}$. 
Then for all $ r\in (1/2, 2)$  and $k =0, 1$, we have
\begin{equation}\label{Jest}
 \Bigabs{\partial_r^k J_+(\Lambda r x) } \le  ( \Lambda |x|)^k \lesssim 1.
\end{equation}
Integration by parts yields
\begin{align*}
\widetilde  I_{\Lambda, t}(x)
&= -i (\Lambda t)^{-1}  \int_{1/2}^2  \frac{d}{dr}\left\{ e^{it m_\beta (\Lambda r) }\right\}  [ m_\beta'(\Lambda r) ]^{-1} J_+( \Lambda r x) \tilde\rho(r) \, dr
\\
&=i (\Lambda t)^{-1}  \int_{1/2}^2   e^{it m_\beta (\Lambda r) }[m_\beta'(\Lambda r) ]^{-1} \partial_r\left[  J( \Lambda r  x) \tilde\rho(r) \right] \, dr
\\
 & \qquad -i(\Lambda t)^{-1}   \int_{1/2}^2   e^{it m_\beta (\Lambda r) } [m_\beta'(\Lambda r) ]^{-2} \Lambda m_\beta''(\Lambda r)  J( \Lambda r x) \tilde\rho(r)  \, dr.
\end{align*}
Now applying Lemma \ref{lm-mest} 
and \eqref{Jest} 
we obtain 
\begin{align*}
|  \widetilde I_{\Lambda, t}(x)| &\lesssim (\Lambda t)^{-1} \angles{\sqrt \beta \Lambda}^{-1} \left( \angles{ \Lambda}^{1/2} + \Lambda^2 \angles{\Lambda}^{-\frac 32}\right)
\\
&\lesssim \Lambda^{-1} \angles{\sqrt \beta \Lambda}^{-1}  \angles{\Lambda}^{\frac 12} t^{-1} 
\\
&\lesssim  \Lambda^{-2}\angles{\sqrt \beta \Lambda}^{-1}  \angles{\Lambda}^{\frac 32} t^{-1} .
\end{align*}
 
 So from now on we assume
$  |x|\gg   \Lambda^{-1}$. Let $x=(x_1, x_2)$, $s_1=\sgn(x_1)$ and
 $x'_2=x_2 /|x|$.
By Proposition \ref{propJ+}\eqref{prop-J+ident}--\eqref{prop-F-ident} we can write 
\begin{equation}
\label{Jiden}
\begin{split}
J_+(\Lambda r x) 
&=2 e^{ i s_1 \Lambda  |x| r} f^{s_1}_1( \Lambda |x| r) - s_1 e^{    i   \Lambda |x_2|  r} f^+ _{| x_2'|}( \Lambda r|x|)
\\
& \qquad +   s_1  e^{ - i \Lambda | x_2| r}  f^-_{| x_2' |} (\Lambda |x| r),
\end{split}
\end{equation}
whereas by Proposition \ref{propJ+}\eqref{prop-fa-decay} the functions $ f^\pm_{a} $, with $a=1$ or $| x_2'|$,
satisfy the estimates
\begin{equation}\label{falm-est}
 \Bigabs{\partial_r^j \left[ f^\pm_a  ( \Lambda r|x|) \right]} \lesssim  ( \Lambda |x|)^{-\frac 12} \qquad (j=0, 1) 
\end{equation}
for all $r\in [1/2, 2]$.
Set 
\begin{equation}
\label{Hiden}
\begin{split}
G^{\pm}_\Lambda (x, r):&= f^{\pm}_1 ( \Lambda r|x|) \tilde\rho(r) , \\
H_\Lambda^\pm (x, r):&= f^\pm_{|x_2'|}  ( \Lambda r|x|) \tilde\rho(r) .
\end{split}
\end{equation}
Then by \eqref{falm-est} we have 
\begin{equation}
\label{Hest}
| \partial_r^j G^{\pm}_\Lambda (x, r) | + | \partial_r^j H_\Lambda^\pm (x, r) | \lesssim   (\Lambda |x|)^{-\frac 12}  \qquad (j=0, 1). 
\end{equation}

Now using \eqref{Jiden} and \eqref{Hiden} in \eqref{Itildedef1} we can write
\begin{align}
\label{Itilsplit}
\widetilde I_{\Lambda, t} (x) =   2 
 \mathcal I^{s_1}_{\Lambda, t} (x) - s_1 \mathcal J^+_{\Lambda, t} (x) + s_1 \mathcal J^-_{\Lambda, t} (x) ,
\end{align}
where 
\begin{align*}
\mathcal I^{\pm}_{\Lambda, t} (x) &=   \int_{1/2}^2  e^{it\phi^{\pm}_{\Lambda} (x, r)} G^{\pm}_\Lambda(x, r) \, dr
 \\
\mathcal  J^\pm_{\Lambda, t} (x) &=  \int_{1/2}^2  e^{it\psi^\pm_{\Lambda} (x, r)} H_\Lambda^\pm (x, r) \, dr
\end{align*}
with 
\begin{align*}
\phi^{\pm}_{\Lambda} (x, r)&=   m_\beta (\Lambda r)  \pm \Lambda   |x| r/t  ,
\\
\psi^\pm_{\Lambda} (x, r) &=   m_\beta (\Lambda r) \pm   \Lambda |x_2| r/t  .
\end{align*}

Observe that
\begin{align*}
\partial_r \phi^{\pm}_{\Lambda} (x, r) =  \Lambda  \left[   m_\beta'(\Lambda r) \pm  |x|/t  \right],\quad \partial_r^2 \phi^{\pm}_{\Lambda} (x, r) =      \Lambda^2 m_\beta''(\Lambda r),
\\
\partial_r \psi^\pm_{\Lambda} (x, r) =  \Lambda  \left[   m_\beta'(\Lambda r) \pm |x_2|/  t  \right],\quad \partial_r^2 \psi^{\pm}_{\Lambda} (x, r) =      \Lambda^2 m_\beta''(\Lambda r),
\end{align*}
By Lemma \ref{lm-mest}, we have
\begin{equation}
\label{ph''est}
|\partial^2_r  \phi^{\pm}_{\Lambda} (x, r)  |  \sim  |\partial^2_r  \psi^\pm_{\Lambda} (x, r)  | \sim \Lambda^3 \angles{\sqrt \beta \Lambda} \angles{ \Lambda }^{-\frac 52} 
\end{equation}
for all $ r\in (1/2, 2)$.

First we estimate $ \mathcal J^+_{\Lambda, t} (x) $ and $ \mathcal J^-_{\Lambda, t} (x) $. The same argument works for  $ \mathcal I^{\pm}_{\Lambda, t} (x)$, and we shall comment on this below.

\smallskip
\noindent \textit{Estimate for $ \mathcal J^+_{\Lambda, t} (x)$.} By
  Lemma \ref{lm-mest}, we have
\begin{equation}
\label{ph'+est1}
|\partial_r  \psi^+_{\Lambda} (x, r)  | \gtrsim   \Lambda \angles{\sqrt \beta \Lambda}  \angles{ \Lambda }^{-\frac 12} 
\end{equation}
where to obtain this lower bound we also used the fact that $m_\beta'$ is positive.

Integration by parts yields
\begin{align*}
  \mathcal J^+_{\Lambda, t} (x)
&=-  i   t^{-1} \int_{1/2}^2 \partial_r\left[  e^{it \psi^+_{\Lambda} (x, r)  } \right]  \left[\partial_r  \psi^+_{\Lambda} (x, r) \right]^{-1} H_\Lambda^+ (x, r)  \, dr
\\
&=i  t^{-1}  \int_{1/2}^2  e^{it \psi^+_{\Lambda} (x, r)   }  \left\{
\frac{\partial_r H_\Lambda^+ (x, r) }{\partial_r  \psi^+_{\Lambda} (x, r)  } -   \frac{\partial^2_r  \psi^+_{\Lambda} (x, r)  H_\Lambda^+ (x, r)  }{\left[\partial_r \psi^+_{\Lambda} (x, r)  \right]^{2}}\right\}\, dr.
\end{align*}
Then using \eqref{Hest},   \eqref{ph''est}  and  \eqref{ph'+est1} we obtain
\begin{equation*}
\begin{split}
|  \mathcal J^+_{\Lambda, t} (x)|
&\lesssim   t^{-1} \angles{\sqrt \beta \Lambda}^{-1}  \left\{ \Lambda^{-1}  \angles{ \Lambda }^{\frac 12}  +    \Lambda  \angles{\Lambda }^{-\frac 32}   \right\}(\Lambda |x|)^{-\frac 12}
\\
&\lesssim  \Lambda^{-1} \angles{\sqrt \beta \Lambda}^{-1} \angles{  \Lambda }^{\frac 12}  t^{-1} 
\\
&\lesssim    \Lambda^{-2}\angles{\sqrt \beta \Lambda}^{-1}  \angles{\Lambda}^{\frac32} t^{-1} .
\end{split}
\end{equation*}

\noindent \textit{Estimate for $ \mathcal J^-_{\Lambda, t} (x)$.}
We treat the non-stationary case
\begin{equation}
\label{stcase}
 | x_2| \ll   \angles{\sqrt \beta \Lambda} \angles{  \Lambda }^{-\frac 12} t \quad \text{or} \quad |x_2| \gg  \angles{\sqrt \beta \Lambda} \angles{  \Lambda }^{-\frac 12}t
\end{equation}
and the stationary case
\begin{equation}
\label{x2stcase}
 | x_2| \sim  \angles{\sqrt \beta \Lambda} \angles{  \Lambda }^{-\frac 12} t
\end{equation}
separately. 

In the non-stationary
case \eqref{stcase}, we have 
$$
|\partial_r \psi^-_{\Lambda} (x, r)|\gtrsim \Lambda  \angles{\sqrt \beta \Lambda} \angles{  \Lambda }^{-\frac 12}.
$$
Hence $ \mathcal J^-_{\Lambda, t}(x)$ can be estimated in exactly the same way as $ \mathcal J^+_{\Lambda,   t}(x)$ , and satisfy the same bound.

So it remains to treat the stationary case. In this case,  
we use Lemma \ref{lm-corput}, \eqref{ph''est}, \eqref{Hest} and \eqref{x2stcase} to obtain 
\begin{equation}\label{Iest-station}
\begin{split}
| \mathcal J^-_{\Lambda, t} (x)| &= \Bigabs{\int_{1/2}^2  e^{it\psi^-_{\Lambda} (x, r)} H_\Lambda^- (x, r) \, dr}
\\
&\lesssim  t^{-\frac12}   \Lambda^{-\frac32}  \angles{\sqrt \beta \Lambda}^{-\frac12} \angles{  \Lambda }^{\frac54 }  \left[ |H_\Lambda^- (x, 2) |+ \int_{1/2}^2 | \partial_r  H_\Lambda^- (x, r)| \, dr\right]
\\
&\lesssim  t^{-\frac12}   \Lambda^{-\frac32}  \angles{\sqrt \beta \Lambda}^{-\frac12} \angles{  \Lambda }^{\frac54 } \cdot (\Lambda |x|)^{-\frac 12}
\\
& \lesssim    \Lambda^{-2}   \angles{\sqrt \beta \Lambda}^{-1}\angles{ \Lambda }^{\frac32 } t^{-1}  ,
\end{split}
\end{equation}
where we also used the fact that $H_\Lambda^- (x, 2) =0$,  and  \eqref{x2stcase} which also implies
$$ |x| \ge |x_2|\sim\angles{\sqrt \beta \Lambda} \angles{  \Lambda }^{-\frac 12} t. $$


\noindent \textit{Estimate for $ \mathcal I^{+}_{\Lambda, t} (x)$.}
By
  Lemma \ref{lm-mest} we have 
\begin{equation*}
|\partial_r  \phi^{+}_{\Lambda} (x, r)  | \gtrsim    \ \Lambda  \angles{\sqrt \beta \Lambda} \angles{  \Lambda }^{-\frac 12}.
\end{equation*}
Then integrating by parts and  using the estimates
\eqref{Hest} and \eqref{ph''est}  we obtain
\begin{equation*}
| \mathcal I^{+}_{\Lambda, t}(x)|
\lesssim   \Lambda^{-2}   \angles{\sqrt \beta \Lambda}^{-1}\angles{ \Lambda }^{\frac32 } t^{-1} .
\end{equation*}

\noindent \textit{Estimate for $ \mathcal I^{-}_{\Lambda, t} (x)$.}
 In the non-stationary
case
\begin{equation*}
 | x| \ll   \angles{\sqrt \beta \Lambda} \angles{  \Lambda }^{-\frac 12} t \quad \text{or} \quad |x| \gg  \angles{\sqrt \beta \Lambda} \angles{  \Lambda }^{-\frac 12}t
\end{equation*}
 we have 
$$
|\partial_r  \phi^{-}_{\Lambda} (x, r) |\sim  \Lambda  \angles{\sqrt \beta \Lambda} \angles{  \Lambda }^{-\frac 12}.
$$
Hence  combining this estimate with \eqref{Hest} and \eqref{ph''est}  we see that the integration by parts argument goes through.

In  the stationary case, 
\begin{equation}
\label{nstcase}
 | x| \sim  \angles{\sqrt \beta \Lambda} \angles{  \Lambda }^{-\frac 12} t
\end{equation}
 we use  Lemma \ref{lm-corput}, \eqref{Hest} and \eqref{ph''est}, as in \eqref{Iest-station}, to obtain the desired estimate .
 \end{proof}

 \begin{proof}[Proof of Theorem \ref{lm-LocStr}]
We shall use the Hardy-Littlewood-Sobolev inequality which
asserts that
\begin{equation}
\label{HLSineq}
\norm{|\cdot |^{-\alpha}\ast f}_{L^a(\R)} \lesssim \ \norm{ f}_{L^b(\R)} 
\end{equation}
whenever $1 < b < a < \infty$ and $0 < \alpha< 1$ obey the scaling condition
$$
\frac1b=\frac1a +1-\alpha.
$$

First note that \eqref{Str2d} holds true for the pair $(q, r)=(\infty, 2)$ 
as this is just the energy inequality.  So we may assume $q\in(2, \infty)$.

Let $q'$ and $r'$ be the conjugates of $q$ and $r$, respectively, i.e., $q'=\frac q{q-1}$ and  $r'=\frac r{r-1}$.
 By the standard $TT^*$--argument, \eqref{Str2d} is equivalent to the estimate 
\begin{equation}
\label{TTstar}
\norm{ TT^\ast F }_{L^{q}_{t} L^{r}_{ x} (\R^{2+1}) } \lesssim \left[ \angles{ \sqrt{  \beta} \Lambda }^{-1} \angles{ \Lambda }^{\frac32} \right]^{1- \frac2r } 
\norm{ F  }_{ L^{q'}_{ t} L_x^{r'}(\R^{2+1} )},
\end{equation}
where 
\begin{equation}\label{TTastF}
\begin{split}
 TT^\ast F (x, t)&= \int_{\R^2}  \int_\R e^{i  x \cdot \xi+i(t-s)\sgn (\xi_1) {m_\beta}(  \xi)}  \rho^2_\Lambda (\xi)   \widehat{F}( \xi, s)\, ds  d\xi
 \\
 &= \int_\R  K_{\Lambda,  t-s} \ast F( \cdot,  s) \, ds,
 \end{split}
\end{equation}
with
\begin{align*}
 K_{\Lambda ,t}(x)&= \int_{\R^d}  e^{i  x \cdot \xi+it \sgn (\xi_1) {m_\beta}(  \xi)}  \rho^2_\Lambda (\xi)   \, d\xi.
\end{align*}
Observe that $$K_{\Lambda, t} \ast g (x)= S_{m_\beta}(t)  P_\Lambda g_\Lambda (x).$$  
So it follows from \eqref{dispest} that
\begin{equation}\label{kest1}
\|K_{\Lambda, t} \ast g \|_{L_x^{\infty}(\R^2)} \lesssim   \angles{ \sqrt{  \beta} \Lambda }^{-1} \angles{ \Lambda }^{\frac32}  |t|^{-1}   \|g\|_{L_x^{1}(\R^2)}.
\end{equation}
On the other hand, we have by Plancherel 
\begin{equation}\label{kest2}
\|K_{\Lambda, t} \ast g \|_{L_x^{2}(\R^2)} \lesssim    \|g\|_{L_x^{2}(\R^2)}.
\end{equation}
So interpolation between \eqref{kest1} and \eqref{kest2} yields
\begin{equation}\label{kest3}
\|K_{\Lambda, t} \ast g \|_{L_x^{r}(\R^2)} \lesssim  \left[ \angles{ \sqrt{  \beta} \Lambda }^{-1} \angles{ \Lambda }^{\frac32} \right]^{1- \frac2r }  |t|^{-\left(1-\frac2r\right)} \|g\|_{L_x^{r'}(\R^2)}.
\end{equation}
 for all $  r \in[2, \infty].$

Applying Minkowski's inequality to \eqref{TTastF}, and then  \eqref{kest3} and  \eqref{HLSineq}
with $(a, b)=(q , q' )$ and $\alpha= 1-2/r$,
 we obtain
 \begin{align*}
\norm{TT^\ast F }_{L^{q}_{t} L^{r}_{ x} (\R^{2+1})}
&\le \norm{   \int_\R \norm{ K_{\Lambda, t-s,} \ast
   F(s, \cdot) }_{L_x^r (\R^2)}  \, ds}_{L^{q}_t(\R)}
  \\
 &\lesssim   \left[ \angles{ \sqrt{  \beta} \Lambda }^{-1} \angles{ \Lambda }^{\frac32} \right]^{1- \frac2r }  \norm{  \int_\R  |t-s|^{-\left(1-\frac2r\right)}
  \norm{ F(s, \cdot) }_{ L_x^{r'}(\R^2)}  \, ds }_{L_t^{q}(\R)}
   \\
 &\lesssim \left[ \angles{ \sqrt{  \beta} \Lambda }^{-1} \angles{ \Lambda }^{\frac32} \right]^{1- \frac2r }  \norm{  
  \norm{ F }_{L_x^{r'}(\R^2) }  }_{L^{q'}_{ t} (\R)}
    \\
 &= \left[ \angles{ \sqrt{  \beta} \Lambda }^{-1} \angles{ \Lambda }^{\frac32} \right]^{1- \frac2r } 
  \norm{ F  }_{ L^{q'}_{ t} L_x^{r'}(\R^{2+1})} \, ,
\end{align*}
which is the desired estimate \eqref{TTstar}.

\end{proof}

\section{Proof of Theorem \ref{WP:theorem}}

We recall that $\epsilon=\beta=1$ in this section.

\subsection{Refined Strichartz estimate} 
As in \cite{BGT,KoTz,KeKo,Ke,LPS1,LPS2,HLRRW} the main ingredient in our analysis is a refined Strichartz estimates for solutions of the non-homogeneous linear equation
\begin{equation}\label{lp2}
\partial_tw+ L_{1 , 1} (D) \left(   1+  \frac{D_2^2}{D_1^2}\right)^\frac12 \partial_{x_1} w=F.
\end{equation}

\begin{lemma}\label{ref:Str:lemma}
Let $s>\frac74$ and $0<T$. Suppose that $w$ is a solution of the linear problem \eqref{lp2}.
Then, 
\begin{equation}\label{refstrichartz}
\|\nabla P_{> 1}w\|_{L^2_TL^{\infty}_{x}}\lesssim T^{\frac12}\| J^{s}w\|_{L^{\infty}_TL^2_x}+
\|J^{s-1}F\|_{L^2_{T,x}} \, .
\end{equation}
\end{lemma}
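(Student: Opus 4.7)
The plan is to deploy a Koch--Tzvetkov style refined Strichartz argument (as in \cite{KoTz,BGT,LPS1,HLRRW}): Littlewood--Paley decompose $w$, chop $[0,T]$ into subintervals whose length is tuned to each dyadic frequency, apply Theorem~\ref{lm-LocStr} via Duhamel on each subinterval, and reassemble. Writing $w_\Lambda := P_\Lambda w$ and $\nabla P_{>1}w = \sum_{\Lambda > 1}\nabla w_\Lambda$, the target reduces to the dyadic estimate
\[
\|\nabla w_\Lambda\|_{L^2_T L^\infty_x} \lesssim T^{\frac12}\Lambda^{\frac74+\epsilon}\|w_\Lambda\|_{L^\infty_T L^2_x} + \Lambda^{\frac34+\epsilon}\|P_\Lambda F\|_{L^2_{T,x}}
\]
for some $\epsilon \in (0, s-\tfrac74)$. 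Once this is in hand, summing in $\Lambda$ via the trivial bound $\|w_\Lambda\|_{L^\infty_T L^2_x} \lesssim \Lambda^{-s}\|J^s w\|_{L^\infty_T L^2_x}$ for the first term (exploiting $\sum_{\Lambda>1}\Lambda^{\frac74+\epsilon-s} < \infty$), and Cauchy--Schwarz in $\Lambda$ together with the Plancherel identity $\|J^{s-1}F\|_{L^2_{T,x}}^2 \sim \sum_\Lambda \Lambda^{2(s-1)}\|P_\Lambda F\|_{L^2_{T,x}}^2$ for the second, yields \eqref{refstrichartz}.

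To prove the dyadic bound, fix $\Lambda > 1$ and partition $[0,T]$ into $\lceil T\Lambda\rceil$ intervals $I_j = [t_j, t_{j+1}]$ of length $\delta_\Lambda := \Lambda^{-1}$. On each $I_j$, the Duhamel formula reads $w_\Lambda(t) = S_{m_\beta}(t-t_j)w_\Lambda(t_j) + \int_{t_j}^t S_{m_\beta}(t-\tau)P_\Lambda F(\tau)\,d\tau$. Choose a Strichartz pair $(q,r)$ with $q > 2$ close to $2$ and $r < \infty$ large, satisfying $1/q + 1/r = 1/2$. Theorem~\ref{lm-LocStr} (with Minkowski's integral inequality absorbing the Duhamel integral) together with the identity $\angles{\sqrt{\beta}\Lambda}^{-1}\angles{\Lambda}^{3/2} \sim \Lambda^{1/2}$ for $\beta = 1$ and $\Lambda > 1$ gives
\[
\|w_\Lambda\|_{L^q_{I_j}L^r_x} \lesssim \Lambda^{\frac14 - \frac{1}{2r}}\bigl(\|w_\Lambda(t_j)\|_{L^2_x} + \|P_\Lambda F\|_{L^1_{I_j}L^2_x}\bigr).
\]
Combining with Bernstein $\|g\|_{L^\infty_x} \lesssim \Lambda^{2/r}\|g\|_{L^r_x}$ for frequency-$\Lambda$ localized $g$, the extra factor $\Lambda$ from $\nabla$, and finally Hölder in time on $I_j$ at cost $\delta_\Lambda^{1/2 - 1/q} = \delta_\Lambda^{1/r}$, produces
\[
\|\nabla w_\Lambda\|_{L^2_{I_j}L^\infty_x} \lesssim \delta_\Lambda^{\frac1r}\Lambda^{\frac54 + \frac{3}{2r}}\bigl(\|w_\Lambda(t_j)\|_{L^2_x} + \|P_\Lambda F\|_{L^1_{I_j}L^2_x}\bigr).
\]

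Squaring, summing in $j$, and using $\sum_j \|w_\Lambda(t_j)\|_{L^2_x}^2 \le (T/\delta_\Lambda)\|w_\Lambda\|_{L^\infty_T L^2_x}^2$ together with $\sum_j \|P_\Lambda F\|_{L^1_{I_j}L^2_x}^2 \le \delta_\Lambda \|P_\Lambda F\|_{L^2_{T,x}}^2$ (Cauchy--Schwarz in $t$), then extracting a square root, yields
\[
\|\nabla w_\Lambda\|_{L^2_T L^\infty_x} \lesssim T^{\frac12}\delta_\Lambda^{\frac1r - \frac12}\Lambda^{\frac54 + \frac{3}{2r}}\|w_\Lambda\|_{L^\infty_T L^2_x} + \delta_\Lambda^{\frac1r + \frac12}\Lambda^{\frac54 + \frac{3}{2r}}\|P_\Lambda F\|_{L^2_{T,x}}.
\]
Substituting $\delta_\Lambda = \Lambda^{-1}$ collapses the powers to $\Lambda^{7/4 + 1/(2r)}$ and $\Lambda^{3/4 + 1/(2r)}$ respectively; taking $r$ sufficiently large that $1/(2r) \le \epsilon$ delivers the dyadic estimate.

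The main (indeed the only) delicate point is the choice $\delta_\Lambda = \Lambda^{-1}$: this is the unique scaling balancing the $\delta_\Lambda^{1/r - 1/2}$ penalty on the data term against the $\delta_\Lambda^{1/r + 1/2}$ gain on the forcing, and it is what produces the regularity threshold $s > 7/4$. Beyond that, the argument is bookkeeping of exponents and a check that sending $r \to \infty$ in Theorem~\ref{lm-LocStr} costs only $\epsilon$ of regularity, so no substantive obstruction is anticipated.
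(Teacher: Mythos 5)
Your proof is correct and follows essentially the same Koch--Tzvetkov strategy as the paper: reduce to a dyadic estimate, partition $[0,T]$ into subintervals of length $\Lambda^{-1}$, apply Duhamel and the frequency-localized Strichartz estimate of Theorem~\ref{lm-LocStr} on each subinterval combined with Bernstein and H\"older in time, and sum. The exponents $\Lambda^{7/4+1/(2r)}$ and $\Lambda^{3/4+1/(2r)}$ and the summation argument agree with the paper's.
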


\begin{proof} To establish the estimate \eqref{refstrichartz} we follow the argument in \cite{KoTz}. 

Let $w_{\Lambda}:=P_{\Lambda}w$ and $F_{\Lambda}:=P_{\Lambda}F$. It is enough to prove that for any dyadic number $\Lambda>1$ and for any small real number $\theta>0$, 
\begin{equation} \label{refstrichartz.1}
\|\nabla w_{\Lambda}\|_{L^2_TL^{\infty}_{x}}\lesssim T^{\frac12}\Lambda^{\frac74+\theta}\| w_{\Lambda}\|_{L^{\infty}_TL^2_x}+
\Lambda^{\frac34+\theta}\|F_{\Lambda}\|_{L^2_{T,x}} \, .
\end{equation}
Indeed, then we would have, by choosing $\theta>0$ such that $s>\theta+\frac74$ and using Cauchy-Schwarz in $\Lambda$, 
\begin{equation*} 
\|\nabla P_{> 1}w\|_{L^2_TL^{\infty}_{x}} \le \sum_{\Lambda >1}\|\nabla w_{\Lambda}\|_{L^2_TL^{\infty}_{x}}
\lesssim T^{\frac12}\left( \sum_{\Lambda>1}\Lambda^{2s} \| w_{\Lambda}\|_{L^{\infty}_TL^2_x}^2 \right)^{\frac12}+
\left( \sum_{\Lambda>1}\Lambda^{2(s-1)} \| F_{\Lambda}\|_{L^2_{T,x}}^2 \right)^{\frac12} \, ,
\end{equation*}
which implies \eqref{refstrichartz}.

Now we prove estimate \eqref{refstrichartz.1}. To do so, we split the interval $[0,T]$ into small intervals  $I_j$ of size $\Lambda^{-1}$. In other words, we have $[0,T]=\underset{j \in J}{\cup} I_j$, where
$I_j=[a_j, b_j]$, $|I_j| \sim \Lambda^{-1}$ and $\# J \sim \Lambda T$. Observe from Bernstein's inequality that 
\begin{equation*} 
\|\nabla w_{\Lambda}\|_{L^2_TL^{\infty}_{x}} \lesssim \Lambda^{1+\frac2r}\|w_{\Lambda}\|_{L^2_TL^r_{x}} \, ,
\end{equation*}
for any $2<r<\infty$. Thus it follows applying H\"older's inequality in time that 
\begin{equation*} 
\|\nabla w_{\Lambda}\|_{L^2_TL^{\infty}_{x}} \lesssim \Lambda^{1+\frac2r} \left( \sum_{j}\|w_{\Lambda}\|_{L^2_{I_j}L^{r}_{x}}^2 \right)^{\frac12} \lesssim \Lambda^{1+\frac1r} \left( \sum_{j}\|w_{\Lambda}\|_{L^q_{I_j}L^{r}_{x}}^2 \right)^{\frac12} , 
\end{equation*} 
where $(q,r)$ is an admissible pair satisfying condition \eqref{admissible}.

Next, employing the Duhamel formula of \eqref{lp2} in each $I_j$ and recalling the definition of $S_{m_1}$ in \eqref{lin:KPFD}, we have for $t\in I_j$,
\begin{equation*}\label{rs2}
w_{\Lambda}(t)= S_{m_1}(t-a_j) w_{\Lambda}(a_j)+\int_{a_j}^t S_{m_1}(t-t')F_{\Lambda}(t') dt' \, ,
\end{equation*}
so that $\|\nabla w_{\Lambda}\|_{L^2_TL^{\infty}_{x}}$ is bounded by
\begin{equation*} 
 \Lambda^{1+\frac1r} \left( \sum_{j}\|S_{m_1}(t-a_j) w_{\Lambda}(a_j)\|_{L^q_{I_j}L^{r}_{x}}^2 +\sum_{j} \left(\int_{I_j}\|S_{m_1}(t-t')F_{\Lambda}(t')\|_{L^q_{I_j}L^{r}_{x}}dt'\right)^2\right)^{\frac12} .
\end{equation*} 

Thus it follows from Theorem \ref{lm-LocStr} that
\begin{align*}
\|\nabla w_{\Lambda}\|_{L^2_TL^{\infty}_{x}} &
\lesssim \ \Lambda^{\frac54+\frac1{2r}}\, \left(\underset{j}{\sum} \|w_{\Lambda}(a_j)\|_{L^2_{x}}^2+  \underset{j}{\sum} \left(\int_{I_j}\|F_{\Lambda}(t')\|_{L^2_x}dt'\right)^2\right)^{\frac12} \\ 
& \lesssim  T^{\frac12}\Lambda^{\frac74+\frac1{2r}}\| w_{\Lambda}\|_{L^{\infty}_TL^2_x}+\Lambda^{\frac34+\frac1{2r}}\| F_{\Lambda}\|_{L^2_{T,x}} \, ,
 \end{align*}
which implies \eqref{refstrichartz.1}  by choosing $2<r<\infty$ such that $\frac1{2r}<\theta$. 
\end{proof}

\subsection{Energy estimate}

We begin by deriving a classical energy estimate on smooth solutions of \eqref{FDKP}.
\begin{lemma} \label{energy_estimate}
Let $s>1$ and $T>0$. There exists $c_{1,s}>0$ such that for any smooth solution of \eqref{FDKP}, we have
\begin{equation} \label{energy_estimate.1}
\|u\|_{L^{\infty}_TH^s_x}^2 \le \|u(0)\|_{H^s}^2+c_{1,s}\|\nabla u\|_{L^1_TL^{\infty}_x}\|u\|_{L^{\infty}_TH^s_x}^2 \, .
\end{equation}
\end{lemma}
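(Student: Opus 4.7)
The plan is the standard commutator energy estimate. First I would apply $J^s$ to the equation \eqref{FDKP1}, pair with $J^s u$ in $L^2_x$, and integrate by parts. The linear dispersive term $\widetilde L_1(D) u$ contributes nothing to the energy: since its symbol $\sgn(\xi_1) m_1(|\xi|)$ is real, the Fourier multiplier $\widetilde L_1(D)$ is skew-adjoint on $L^2(\R^2)$ and commutes with $J^s$, so
\[
\int_{\R^2} J^s u \cdot J^s \widetilde L_1(D) u \,\d x = \int_{\R^2} \widetilde L_1(D)(J^s u) \cdot J^s u \,\d x = 0.
\]
Hence the only contribution comes from the nonlinearity $3\partial_{x_1}(u^2) = 6 u \partial_{x_1} u$, which yields
\[
\tfrac{1}{2}\tfrac{d}{dt}\|J^s u\|_{L^2_x}^2 = -6 \int_{\R^2} J^s u \cdot J^s(u\partial_{x_1} u)\,\d x.
\]

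Next I would split the nonlinear term by introducing the commutator $[J^s, u]\partial_{x_1} u$:
\[
\int J^s u \cdot J^s(u\partial_{x_1} u)\,\d x = \int J^s u \cdot u J^s \partial_{x_1} u \,\d x + \int J^s u \cdot [J^s, u]\partial_{x_1} u \,\d x.
\]
The first piece is handled by an integration by parts in $x_1$: writing $u J^s \partial_{x_1} u \cdot J^s u = \tfrac{1}{2} u \, \partial_{x_1}((J^s u)^2)$, one gets $-\tfrac{1}{2}\int \partial_{x_1} u \,(J^s u)^2 \,\d x$, bounded in absolute value by $\|\partial_{x_1} u\|_{L^\infty_x} \|J^s u\|_{L^2_x}^2$. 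For the second piece, I would apply the Kato--Ponce commutator estimate
\[
\bignorm{[J^s, u]\partial_{x_1} u}_{L^2_x} \lesssim \|\nabla u\|_{L^\infty_x} \|J^{s-1}\partial_{x_1} u\|_{L^2_x} + \|J^s u\|_{L^2_x}\|\partial_{x_1} u\|_{L^\infty_x} \lesssim \|\nabla u\|_{L^\infty_x} \|J^s u\|_{L^2_x},
\]
valid for $s>1$, and then use Cauchy--Schwarz to control $\int J^s u \cdot [J^s,u]\partial_{x_1} u\,\d x$ by the same quantity $\|\nabla u\|_{L^\infty_x} \|J^s u\|_{L^2_x}^2$.

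Combining, I obtain the differential inequality
\[
\tfrac{d}{dt}\|J^s u(t)\|_{L^2_x}^2 \le c_{1,s} \|\nabla u(t)\|_{L^\infty_x} \|J^s u(t)\|_{L^2_x}^2.
\]
Integrating on $[0,t]$ and taking the supremum over $t \in [0,T]$ gives \eqref{energy_estimate.1}. The only delicate point is the Kato--Ponce commutator step; otherwise the argument is routine because the skew-adjointness of $\widetilde L_1(D)$ entirely removes the high-order dispersive contribution from the energy identity.
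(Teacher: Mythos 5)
Your proposal is correct and follows essentially the same route as the paper: apply $J^s$, pair with $J^s u$, drop the dispersive term by skew-adjointness, split the nonlinearity into a commutator piece (handled by Kato--Ponce and Cauchy--Schwarz) and a piece handled by integration by parts in $x_1$, then integrate in time. The only nitpick is a wording slip: the symbol of $\widetilde L_1(D)$ is $i\,\sgn(\xi_1) m_1(|\xi|)$, i.e.\ purely imaginary (and odd), which is what makes the operator skew-adjoint and kills its contribution to the energy identity.
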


The proof relies on the Kato-Ponce commutator estimate (see \cite{KaPo}). 
\begin{lemma} \label{Kato-Ponce}
For $s >1$, we denote $[J^s,f]g = J^s(fg)-fJ^sg$. Then,
\begin{equation} \label{Kato-Ponce.1}
\big\| [J^s,f]g \big\|_{L^2} \lesssim \|\nabla f\|_{L^{\infty}}\|J^{s-1}g\|_{L^{2}}
+\|J^sf\|_{L^2}\|g\|_{L^{\infty}} \, .
\end{equation}
\end{lemma}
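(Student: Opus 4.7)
The plan is to prove this commutator estimate via a Littlewood-Paley / Bony paraproduct decomposition. Write
\[ fg = T_f g + T_g f + R(f,g) , \]
where $T_f g = \sum_{\Lambda} P_{\le \Lambda/8} f \cdot P_\Lambda g$ is the low-high paraproduct (with $f$ at the low frequency), $T_g f$ is the same object with the roles of $f$ and $g$ swapped (high-low), and $R(f,g) = \sum_{\Lambda \sim \Gamma} P_\Lambda f \cdot P_\Gamma g$ collects the high-high interactions. The essential feature is that on each of these pieces the output frequency is essentially controlled by one of the two factors, so that $J^s$ can be compared to $J^s$ acting on that factor alone.

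For the low-high piece, one has, after regrouping against the analogous decomposition of $f \cdot J^s g$,
\[ J^s(T_f g) - T_f(J^s g) = \sum_{\Lambda} [J^s, P_{\le \Lambda/8} f] P_\Lambda g , \]
with the leftover comparison terms also being of acceptable type. On each dyadic block, the symbol $\angles{\xi+\eta}^s - \angles{\xi}^s$ is, by a first-order Taylor expansion in $\eta$ in the regime $|\eta| \le \Lambda/4 \ll \Lambda \sim |\xi|$, of order $1$ in $\eta$ and of order $s-1$ in $\xi$, with principal part $s \angles{\xi}^{s-2} \xi \cdot \eta$. Viewing the commutator as a bilinear Fourier multiplier and invoking the Coifman-Meyer multilinear multiplier theorem (or writing the kernel explicitly as a convolution against $\nabla f$ composed with an operator of order $s-1$), we get
\[ \bignorm{[J^s, P_{\le \Lambda/8} f] P_\Lambda g}_{L^2} \lesssim \|\nabla f\|_{L^\infty}\, \|J^{s-1} P_\Lambda g\|_{L^2} . \]
Almost-orthogonality in $\Lambda$ and the Littlewood-Paley square function characterisation of $\|J^{s-1} g\|_{L^2}$ then produce the first term on the right-hand side of \eqref{Kato-Ponce.1}.

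For the high-low paraproduct $T_g f$ and the remainder $R(f,g)$, the output frequency is comparable to the frequency $\Lambda$ of the factor involving $f$, so $J^s$ acting on such a block is morally multiplication by $\Lambda^s$ on that factor. Up to a commutator $[J^s, P_{\le \Lambda/8} g]\, P_\Lambda f$ that is treated in the same spirit as above but with the roles of $f$ and $g$ exchanged (and hence produces only lower-order corrections absorbable by $\|J^s f\|_{L^2}\|g\|_{L^\infty}$), we can move $J^s$ onto $f$. A Hölder estimate $\|P_{\le \Lambda/8} g\|_{L^\infty} \le \|g\|_{L^\infty}$ combined with the square function then yields the second term on the right of \eqref{Kato-Ponce.1}. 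The subtraction of $f J^s g$ in the definition of the commutator is what allows us to cancel the most dangerous low-high cross-term.

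The main obstacle is the quantitative control of $[J^s, \phi]h$ when $\phi$ is a low-frequency function and $h$ is spectrally localised at frequency $\sim \Lambda$: one must exploit the smoothness of the Bessel symbol $\angles{\xi}^s$ at high frequency so that this commutator truly gains one derivative on $\phi$ while losing only $s-1$ (not $s$) derivatives on $h$. The cleanest framework is the Coifman-Meyer multilinear multiplier theorem applied to the bilinear symbol $\angles{\xi+\eta}^s - \angles{\xi}^s$; an equivalent route, followed in the original paper \cite{KaPo}, is to represent the commutator's kernel via the inverse Fourier transform of this symbol and carry out direct kernel bounds. Once this single-block estimate is in hand, the dyadic summation is routine via the square function, so the whole scheme depends on that one technical lemma.
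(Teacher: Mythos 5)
First, note that the paper does not actually prove this lemma: it is the classical Kato--Ponce commutator estimate, invoked as a black box with the citation \cite{KaPo}, whose original argument goes through bilinear (Coifman--Meyer type) multiplier results rather than an explicit dyadic decomposition. So a paraproduct scheme such as yours is a legitimate route in principle, and your low--high analysis is correct and standard: Taylor-expanding $\angles{\xi+\eta}^s-\angles{\xi}^s$ to first order in $\eta$ in the regime $|\eta|\ll|\xi|\sim\Lambda$ extracts one full derivative of $f$ and leaves an operator of order $s-1$ on $g$, and the outputs are frequency-localised near $\Lambda$, so almost orthogonality gives the term $\|\nabla f\|_{L^{\infty}}\|J^{s-1}g\|_{L^{2}}$.

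The gap is in your treatment of the high--high remainder. The claim that for $R(f,g)$ ``the output frequency is comparable to the frequency $\Lambda$ of the factor involving $f$'' is false: there both inputs sit at frequency $\sim\Lambda$ but the output is only $\lesssim\Lambda$, so the blocks are not almost orthogonal and ``H\"older plus square function'' does not close. Concretely, the piece $R(f,J^sg)$ coming from $fJ^sg$ is not a lower-order correction absorbable by a commutator with $P_{\le\Lambda/8}g$ (no factor of $R(f,g)$ has small frequency): blockwise H\"older gives $\|P_\Lambda f\,P_{\sim\Lambda}J^sg\|_{L^2}\lesssim\angles{\Lambda}^s\|P_\Lambda f\|_{L^2}\|g\|_{L^\infty}$, and because the outputs overlap one can only sum this in $\ell^1$ over $\Lambda$, which produces $\sum_\Lambda\angles{\Lambda}^s\|P_\Lambda f\|_{L^2}$ (a Besov $\ell^1$ quantity) rather than $\|J^sf\|_{L^2}$; the alternative bound via $\|P_\Lambda f\|_{L^\infty}\lesssim\Lambda^{-1}\|\nabla f\|_{L^\infty}$ likewise yields only $\sum_\Lambda\angles{\Lambda}^{s-1}\|P_\Lambda g\|_{L^2}$, not $\|J^{s-1}g\|_{L^2}$. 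The term $J^sR(f,g)$ itself is fine, since the output weight supplies the geometric gain $(\angles{M}/\angles{\Lambda})^s$ and Schur's test applies for $s>0$; it is precisely the piece where $J^s$ has already fallen on $g$ at high frequency that size estimates alone cannot handle, and one must use the cancellation in the symbol, e.g.\ by applying the Coifman--Meyer theorem to $\bigl(\angles{\xi+\eta}^s-\angles{\eta}^s\bigr)\angles{\xi}^{-s}$ on the region where the frequency $\xi$ of $f$ dominates (this is where $s>1$ makes the symbol estimates comfortable). That is essentially what Kato and Ponce do in the cited reference, and it is the missing ingredient your sketch defers but does not reduce to correctly in the high--high case.
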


\begin{proof}[Proof of Lemma \ref{energy_estimate}]
Applying $J^s$ to \eqref{FDKP}, multiplying by $J^su$ and integrating in space leads to 
\begin{displaymath}
\frac12 \frac{d}{dt} \int_{\mathbb R^2} (J^su)^2 \, dx=-6\int_{\mathbb R^2} [J^s,u]\partial_{x_1}u \, J^su \, dx-6\int_{\mathbb R^2}uJ^s\partial_{x_1}u J^su dx \, .
\end{displaymath}
We use the Cauchy-Schwarz inequality and the commutator estimate \eqref{Kato-Ponce.1} to deal the first term on the right-hand side and integrate by parts in $x_1$ and use H\"older's inequality to deal with the second term. This implies that
\begin{equation*} 
 \frac{d}{dt}\|J^su\|_{L^2_{x}}^2 \lesssim \|\nabla u\|_{L^{\infty}_{x}} \|J^su\|_{L^2_{x}}^2 \, .
\end{equation*}
Estimate \eqref{energy_estimate.1} follows then by integrating the former estimate between $0$ and $T$ and applying H\"older's inequality in time in the nonlinear term. 
\end{proof}

Now we use the refined Strichartz estimate to control the term $\|\nabla u\|_{L^1_TL^{\infty}_x}$. 
\begin{lemma} \label{Stric_norm:lemma} Let $s>\frac74$ and $T>0$. Then, there exists $c_{2,s}>0$ such that for any solution of \eqref{FDKP}, we have
\begin{equation}\label{Stric_norm:lemma.1}
\|\nabla u\|_{L^1_TL^{\infty}_x} \le c_{2,s}T\left(1+\|u\|_{L^{\infty}_TH^s_{x}}\right)\|u\|_{L^{\infty}_TH^s_{x}} \, .
\end{equation}
\end{lemma}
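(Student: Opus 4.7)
The plan is to split $\nabla u = \nabla P_{\le 1} u + \nabla P_{> 1} u$ and handle the two pieces separately, applying the refined Strichartz estimate \eqref{refstrichartz} to the high-frequency part and a trivial Bernstein bound to the low-frequency part.

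For the low-frequency piece, Bernstein's inequality together with the fact that the Fourier multiplier $\nabla P_{\le 1}$ has symbol supported in a compact ball gives
\begin{equation*}
\|\nabla P_{\le 1} u\|_{L^1_T L^\infty_x} \le T \|\nabla P_{\le 1} u\|_{L^\infty_T L^\infty_x} \lesssim T \|u\|_{L^\infty_T L^2_x} \le T \|u\|_{L^\infty_T H^s_x},
\end{equation*}
which is already of the desired form.

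For the high-frequency piece, I view $u$ as a solution of the inhomogeneous linear equation \eqref{lp2} with forcing $F = -3\partial_{x_1}(u^2)$. Applying Lemma \ref{ref:Str:lemma} and Cauchy--Schwarz in time yields
\begin{equation*}
\|\nabla P_{>1} u\|_{L^1_T L^\infty_x} \le T^{\frac12}\|\nabla P_{>1} u\|_{L^2_T L^\infty_x} \lesssim T \|J^s u\|_{L^\infty_T L^2_x} + T^{\frac12}\bigl\|J^{s-1}\partial_{x_1}(u^2)\bigr\|_{L^2_{T,x}}.
\end{equation*}
Since $s > 7/4 > 1$, the standard Kato--Ponce / Moser product estimate combined with the Sobolev embedding $H^s(\mathbb R^2) \hookrightarrow L^\infty(\mathbb R^2)$ gives
\begin{equation*}
\bigl\|J^{s-1}\partial_{x_1}(u^2)\bigr\|_{L^2_x} \lesssim \|J^s(u^2)\|_{L^2_x} \lesssim \|u\|_{L^\infty_x}\|J^s u\|_{L^2_x} \lesssim \|u\|_{H^s_x}^2.
\end{equation*}
Integrating in time (using $L^2_T \le T^{1/2} L^\infty_T$) produces $T^{1/2}\|u\|_{L^\infty_T H^s_x}^2$, and combining gives
\begin{equation*}
\|\nabla P_{>1} u\|_{L^1_T L^\infty_x} \lesssim T \|u\|_{L^\infty_T H^s_x} + T \|u\|_{L^\infty_T H^s_x}^2 = T\bigl(1 + \|u\|_{L^\infty_T H^s_x}\bigr)\|u\|_{L^\infty_T H^s_x}.
\end{equation*}
Summing the two contributions yields \eqref{Stric_norm:lemma.1}.

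There is no real obstacle here: the nontrivial work has already been done in the refined Strichartz estimate of Lemma \ref{ref:Str:lemma}, which precisely encodes the requirement $s > 7/4$ through the exponent $\frac74 + \frac{1}{2r}$ produced by the frequency-localised Strichartz estimate applied on intervals of length $\Lambda^{-1}$. The only mild point to double-check is that the product estimate requires $s > 1$ (which is provided by $s > 7/4$) so that $\|u\|_{L^\infty_x} \lesssim \|u\|_{H^s_x}$, ensuring the nonlinear source term $\partial_{x_1}(u^2)$ can be absorbed at the $H^{s-1}$-level in spacetime $L^2$.
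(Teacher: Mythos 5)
Your proof is correct and follows essentially the same route as the paper: Bernstein for the low frequencies, and the refined Strichartz estimate of Lemma \ref{ref:Str:lemma} applied to \eqref{lp2} with $F=-3\partial_{x_1}(u^2)$, with the nonlinearity absorbed via the algebra/product estimate in $H^s$, $s>1$, and H\"older in time. The only cosmetic difference is that you split into $P_{\le 1}$ and $P_{>1}$ before applying Cauchy--Schwarz in time, whereas the paper does it in the opposite order; this changes nothing.
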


\begin{proof} 
First, we deduce from the Cauchy-Schwarz inequality in time that 
\begin{equation} \label{Stric_norm:lemma.2}
\|\nabla u\|_{L^1_TL^{\infty}_x} \le T^{\frac12} \| \nabla u\|_{L^2_TL^{\infty}_x} \le T^{\frac12} \| P_{\le 1}\nabla u\|_{L^2_TL^{\infty}_x}
+T^{\frac12} \| P_{> 1}\nabla u\|_{L^2_TL^{\infty}_x} \, .
\end{equation}
The first term on the right-hand side of \eqref{Stric_norm:lemma.2} is controlled by using Bernstein's inequality,
\begin{equation}  \label{Stric_norm:lemma.3}
T^{\frac12} \| P_{\le 1}\nabla u\|_{L^2_TL^{\infty}_x} \lesssim T \|u\|_{L^{\infty}_TL^2_{x}} \, .
\end{equation}
To estimate the second term on the right-hand side of \eqref{Stric_norm:lemma.2} we use the refined Strichartz estimate \eqref{refstrichartz}. It follows that 
\begin{equation*} 
T^{\frac12} \| P_{> 1}\nabla u\|_{L^2_TL^{\infty}_x} \lesssim T\| u\|_{L^{\infty}_TH^s_x}+
T^{\frac12}\| J^{s-1}\partial_{x_1}(u^2)\|_{L^2_{T,x}} \lesssim  T\| u\|_{L^{\infty}_TH^s_x}+
T^{\frac12}\| u^2\|_{L^2_{T}H^s_x}\, .
\end{equation*}
Hence, we deduce since $H^s(\mathbb R^2)$ is a Banach algebra for $s>1$ and H\"older's inequality in time that
\begin{equation} \label{Stric_norm:lemma.4}
T^{\frac12} \| P_{> 1}\nabla u\|_{L^2_TL^{\infty}_x}  \lesssim  T\| u\|_{L^{\infty}_TH^s_x}+
T\| u\|_{L^{\infty}_{T}H^s_x}^2\, .
\end{equation}
We conclude the proof of \eqref{Stric_norm:lemma.1} gathering \eqref{Stric_norm:lemma.2}, \eqref{Stric_norm:lemma.3} and \eqref{Stric_norm:lemma.4}.
\end{proof}

\subsection{Uniqueness and $L^2$-Lipschitz bound of the flow} \label{section_uniq}
Let $u_1$ and $u_2$ be two solutions of the equation in \eqref{FDKP} in the class \eqref{WP:theorem.1} for some positive $T$ with respective initial data $u_1(\cdot,0)=\varphi_1$ and $u_2(\cdot,0)=\varphi_2$. We define the positive number $K$ by 
\begin{equation} \label{K}
K=\max\big\{\|\nabla u_1\|_{L^1_TL^{\infty}_{x}} , \|\nabla u_2\|_{L^1_TL^{\infty}_{x}} \big\} \, .
\end{equation}
We set $v=u_1-u_2$. Then $v$ satisfies 
\begin{equation} \label{fKPdiff}
\partial_tv+ L_{1 , 1} (D) \left(   1+ \epsilon \frac{D_2^2}{D_1^2}\right)^\frac12 \partial_{x_1} v+3\partial_{x_1}\big((u_1+u_2)v\big)=0 \, ,
\end{equation}
with initial datum $v(\cdot,0)=\varphi_1-\varphi_2$.

We want to estimate $v$ in $L^2(\mathbb R^2)$. We multiply \eqref{fKPdiff} by $v$, integrate in space and integrate by parts in $x_1$ to deduce that
\begin{displaymath}
\frac12\frac{d}{dt}\int_{\mathbb R^2} v^2 \, dx=-3 \int_{\mathbb R^2} \partial_{x_1}\big((u_1+u_2)v\big)v \, dx=
-\frac32\int_{\mathbb R^2}\partial_{x_1}(u_1+u_2)v^2\, dx \, .
\end{displaymath}
This implies from H\"older's inequality that 
\begin{displaymath}
\frac{d}{dt}\|v\|_{L^2_{x}}^2 \lesssim \big(\|\partial_{x_1}u_1\|_{L^{\infty}_{x}}+ \|\partial_{x_1}u_2\|_{L^{\infty}_{x}}\big)\|v\|_{L^2_{x}}^2 \, .
\end{displaymath}
Therefore, it follows from Gronwall's inequality that 
\begin{equation} \label{diffL2}
\sup_{t \in [0,T]}\|v(\cdot,t)\|_{L^2_{x}}=\sup_{t \in [0,T]}\|u_1(\cdot,t)-u_2(\cdot,t)\|_{L^2_{x}} \le e^{cK}\|\varphi_1-\varphi_2\|_{L^2} \, .
\end{equation}
Estimate \eqref{diffL2} provides the uniqueness result in Theorem \ref{WP:theorem} by choosing $\varphi_1=\varphi_2=u_0$.

\subsection{\textit{A priori} estimate}
When $s > 2$, Theorem \ref{WP:theorem} follows from a standard parabolic regularization method. The argument
also yields a blow-up criterion. 
\begin{proposition} 
Let $s>2$. Then, for any $u_0 \in H^s(\mathbb R^2)$, there exists a positive time $T(\|u_0\|_{H^s})$ and a unique maximal solution $u$ of \eqref{FDKP} in $C^0([0,T^{\star}) : H^s(\mathbb R^2))$ with $T^{\star}>T(\|u_0\|_{H^s})$.
Moreover, if the maximal time of existence $T^{\star}$ is finite, then
\begin{equation*}
\lim_{t \nearrow T^{\star}} \|u(t) \|_{H^s}=+\infty
\end{equation*}
and the flow map $u_0 \mapsto u(t)$ is continuous from $H^s(\mathbb R^2)$ to $H^s(\mathbb R^2)$. 
\end{proposition}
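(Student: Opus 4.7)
The plan is to proceed by a standard parabolic regularization, exploiting the fact that for $s>2$ the two-dimensional Sobolev embedding $H^s(\R^2)\hookrightarrow W^{1,\infty}(\R^2)$ closes the energy estimate of Lemma~\ref{energy_estimate} without any dispersive input. First I introduce the viscous perturbation
\begin{equation*}
\partial_t u^\mu + \widetilde L_{1}(D)\, u^\mu + 3\partial_{x_1}\bigl((u^\mu)^2\bigr) - \mu \Delta u^\mu = 0, \qquad u^\mu(0)=u_0,
\end{equation*}
with $\mu\in(0,1]$. Since $e^{\mu t\Delta}$ is smoothing on $H^s(\R^2)$ while $\widetilde L_{1}(D)$ generates a unitary group on every Sobolev space, the Duhamel formulation admits a contraction mapping argument and produces, for each fixed $\mu>0$, a local solution $u^\mu\in C([0,T_\mu];H^s(\R^2))$.

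Next I mimic the calculation underlying Lemma~\ref{energy_estimate} on $u^\mu$: the viscous term contributes $\mu\|\nabla J^s u^\mu\|_{L^2}^2\ge 0$ and can be discarded, while the transport and commutator terms are handled by Kato--Ponce exactly as before, yielding the $\mu$-uniform inequality
\begin{equation*}
\|u^\mu(t)\|_{H^s_x}^2 \le \|u_0\|_{H^s}^2 + c\int_0^t \|\nabla u^\mu(\tau)\|_{L^\infty_x}\|u^\mu(\tau)\|_{H^s_x}^2\,d\tau.
\end{equation*}
For $s>2$ the embedding $\|\nabla u^\mu\|_{L^\infty}\lesssim \|u^\mu\|_{H^s}$ converts this into a closed Bihari-type bound, providing a time $T=T(\|u_0\|_{H^s})>0$ (which can be chosen nonincreasing in its argument) together with $\sup_{\mu\in(0,1]}\|u^\mu\|_{L^\infty_T H^s_x}\le 2\|u_0\|_{H^s}$. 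The $L^2$-Lipschitz computation of Section~\ref{section_uniq}, adapted to the difference $u^{\mu_1}-u^{\mu_2}$ (with the contribution $(\mu_1-\mu_2)\Delta u^{\mu_1}$ absorbed after integration by parts using the uniform $H^s$ bound), shows that $\{u^\mu\}$ is Cauchy in $C([0,T];L^2(\R^2))$; interpolating with the uniform $H^s$ bound gives strong convergence in $C([0,T];H^{s'}(\R^2))$ for every $s'<s$ to a limit $u\in L^\infty_T H^s_x$ which satisfies \eqref{FDKP}, and the standard time-continuity argument for Sobolev solutions of quasilinear equations upgrades this to $u\in C([0,T];H^s(\R^2))$.

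The maximal time $T^\star$ is then defined as the supremum of times on which an $H^s$-solution exists. The blow-up criterion follows by contradiction: if $T^\star<\infty$ and $M:=\limsup_{t\nearrow T^\star}\|u(t)\|_{H^s}<\infty$, picking $t_0\in[0,T^\star)$ with $T^\star-t_0<\tfrac12 T(M)$ and restarting the local theory at $t_0$ extends the solution past $T^\star$, contradicting maximality. Continuous dependence is obtained by a Bona--Smith argument: set $u_{0,n}=P_{\le 2^n}u_0$, construct smooth solutions $u_n$ with higher-regularity estimates $\|u_n\|_{L^\infty_T H^{s+\sigma}_x}\lesssim 2^{n\sigma}\|u_0\|_{H^s}$, and interpolate the coarse $L^2$-Lipschitz bound $\|u-u_n\|_{L^\infty_T L^2_x}\lesssim \|u_0-u_{0,n}\|_{L^2}=o(2^{-ns})$ against the higher-regularity control to deduce $\|u-u_n\|_{L^\infty_T H^s_x}\to 0$; the same procedure applied to perturbed data $v_0$ close to $u_0$, together with the smooth-data difference estimate, then transfers continuity back to the rough level.

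The main obstacle is the continuity of the flow map at the endpoint regularity $s>2$: the $L^2$-Lipschitz estimate loses all derivatives, and the smooth-data comparison grows polynomially in the truncation parameter $n$, so neither bound alone suffices. Only the Bona--Smith interpolation, combined with the sharp fact that $\|P_{\le 2^n}u_0-u_0\|_{H^s}\to 0$ at the critical regularity, reconciles the two and produces $H^s$-continuity. All remaining ingredients---uniform existence, uniqueness, and the blow-up alternative---follow routinely once $s>2$ is used to close the energy estimate by Sobolev embedding.
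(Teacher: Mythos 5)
Your proposal is correct and is precisely the ``standard parabolic regularization method'' that the paper invokes without detail for this proposition: viscous approximation $-\mu\Delta$, $\mu$-uniform energy estimates closed by $H^s(\R^2)\hookrightarrow W^{1,\infty}(\R^2)$ for $s>2$, an $L^2$-difference estimate to pass to the limit, the blow-up alternative by restarting, and a Bona--Smith argument for persistence and continuity. The paper gives no proof of this proposition, so your write-up simply fills in the standard details in the same spirit the authors intend.
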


Let $u_0 \in H^{\infty}(\mathbb R^2)$. From the above result, there exists a solution $u \in C([0,T^{\star}) : H^{\infty}(\mathbb R^2))$ to \eqref{FDKP}, where $T^{\star}$ is the maximal time of existence of $u$ satisfying $T^{\star} \ge T(\|u_0\|_{H^s})$ and we have the blow-up alternative
\begin{equation} \label{blow-up_alt}
\lim_{t \nearrow  T^{\star}}\|u(t)\|_{H^3_{x}}=+\infty \quad \text{if} \quad T^{\star}<+\infty \, .
\end{equation}

Then, by using a bootstrap argument, we prove that the solution satisfies a suitable \textit{a priori} estimate on positive time interval depending only of the $H^s$ norm the initial datum. 
\begin{lemma} \label{a priori}
Let $\frac74 < s \le 2$. There exist $K_s>0$ and $A_s>0$  such that $T^{\star} > (A_s\|u_0\|_{H^s}+1)^{-2}$,
\begin{equation} \label{aprioest}
\|u\|_{L^{\infty}_TH^s} \le 2\|u_0\|_{H^s_x} \hskip7pt \text{and} \hskip7pt  \|\nabla u\|_{L^1_TL^{\infty}_x} \le K_s  \hskip7pt  \text{with}  \hskip7pt  T=(A_s\|u_0\|_{H^s}+1)^{-2}\, .
\end{equation}
\end{lemma}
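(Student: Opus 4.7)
The plan is a classical bootstrap (continuity) argument that marries the energy estimate of Lemma \ref{energy_estimate} with the Strichartz control of Lemma \ref{Stric_norm:lemma}, and is closed off by the blow-up alternative \eqref{blow-up_alt}. Since $u \in C([0,T^\star) : H^\infty(\mathbb R^2))$, Sobolev embedding makes $t \mapsto \|\nabla u(t)\|_{L^\infty_x}$ continuous, so the two quantities $T' \mapsto \|u\|_{L^\infty_{T'} H^s_x}$ and $T' \mapsto \|\nabla u\|_{L^1_{T'} L^\infty_x}$ are continuous and non-decreasing on $[0,T^\star)$, the second vanishing at $T'=0$. This continuity will drive the bootstrap.

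First I would calibrate the constants. Choose $K_s$ so that $c_{1,s} K_s \le 1/4$: under the hypothesis $\|\nabla u\|_{L^1_{T'} L^\infty_x} \le K_s$, Lemma \ref{energy_estimate} then yields
\[ \|u\|_{L^\infty_{T'} H^s_x}^2 \le \tfrac{4}{3}\|u_0\|_{H^s}^2, \]
strictly below $4\|u_0\|_{H^s}^2$. Conversely, under the hypothesis $\|u\|_{L^\infty_{T'} H^s} \le 2\|u_0\|_{H^s}$, Lemma \ref{Stric_norm:lemma} gives
\[ \|\nabla u\|_{L^1_{T'} L^\infty_x} \le 2c_{2,s}\, T'\, \|u_0\|_{H^s}(1+2\|u_0\|_{H^s}). \]
Setting $x = \|u_0\|_{H^s}$ and $T' \le T := (A_s x+1)^{-2}$, the right-hand side becomes $2c_{2,s}\cdot x(1+2x)/(A_s x+1)^2$, whose supremum over $x \ge 0$ equals $[4(A_s-2)]^{-1}$ whenever $A_s > 4$ (elementary calculus). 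Picking $A_s$ sufficiently large (depending only on $c_{1,s}$ and $c_{2,s}$) forces this to be $< K_s/2$, strictly below the bootstrap threshold.

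With $K_s$ and $A_s$ so chosen, define
\[ T_1 := \sup\{T' \in (0, \min(T, T^\star)] : \|u\|_{L^\infty_{T'} H^s_x} \le 2\|u_0\|_{H^s} \text{ and } \|\nabla u\|_{L^1_{T'} L^\infty_x} \le K_s\}. \]
Setting aside the trivial case $u_0 \equiv 0$, continuity gives $T_1 > 0$, and the two calibrations above show that on $[0, T_1]$ both inequalities in fact hold \emph{strictly}. By continuity they persist slightly past $T_1$, and maximality forces $T_1 = \min(T, T^\star)$.

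It remains to rule out $T^\star \le T$. If that held, then $T_1 = T^\star$ and $\|\nabla u\|_{L^1_{T^\star} L^\infty_x} \le K_s$. Re-running the proof of Lemma \ref{energy_estimate} at the higher regularity $s'=3$ and invoking Gronwall's inequality yields $\|u(t)\|_{H^3}^2 \le \|u_0\|_{H^3}^2 \exp(c\, K_s)$ for all $t \in [0,T^\star)$, contradicting the blow-up alternative \eqref{blow-up_alt}. Hence $T^\star > T$ and \eqref{aprioest} follows on $[0,T]$. The only genuinely non-automatic piece of bookkeeping is the second step: matching the algebraic form $(A_s\|u_0\|_{H^s}+1)^{-2}$ of $T$ to the bilinear-in-$\|u_0\|_{H^s}$ Strichartz bound \emph{uniformly} over all initial sizes is what pins down the dependence of $A_s$ on $c_{1,s}$ and $c_{2,s}$ via the supremum computation above.
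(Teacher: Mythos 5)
Your proof is correct and follows essentially the same route as the paper: a continuity/bootstrap argument combining the energy estimate of Lemma \ref{energy_estimate}, the gradient control of Lemma \ref{Stric_norm:lemma} (with $T\sim(A_s\|u_0\|_{H^s}+1)^{-2}$ chosen to make $\|\nabla u\|_{L^1_TL^\infty_x}$ small), and the $H^3$ blow-up alternative \eqref{blow-up_alt} to rule out $T^\star\le T$. The only differences are cosmetic: you bootstrap both quantities simultaneously and calibrate $A_s$ via the explicit supremum $\sup_{x\ge 0} x(1+2x)/(A_sx+1)^{-2}$ (correctly computed), whereas the paper runs the contradiction on the $H^s$ bound alone with an explicit constant $A_s$ and closes the $H^3$ estimate by smallness rather than Gronwall.
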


\begin{proof} For $\frac74<s \le 2$, let us define
\begin{displaymath}
T_0:=\sup\Big\{ T \in (0,T^{\star}) \ : \ \|u\|_{L^{\infty}_TH^s_x} \le 2 \|u_0\|_{H^s_x} \Big\} \, .
\end{displaymath}
Note that the above set is nonempty since $u \in C([0,T^{\star}) : H^{\infty}(\mathbb R^2))$, so that $T_0$ is well-defined. We argue by contradiction assuming that $0<T_0<(A_s\|u_0\|_{H^s}+1)^{-2} \le 1$ for  $A_s=8(1+c_{1,s}+c_{1,3})(1+c_{2,s})$ (where $c_{1,s}$ and $c_{2,s}$ are respectively defined in  Lemmas \ref{energy_estimate} and \ref{Stric_norm:lemma}).

Let $0<T_1<T_0$. We have from the definition of $T_0$ that  $\|u\|_{L^{\infty}_{T_1}H^s}^2 \le 4 \|u_0\|_{H^s}^2$. Then estimate \eqref{Stric_norm:lemma.1} yields 
\begin{displaymath}
\|\nabla u\|_{L^1_{T_1}L^{\infty}_x} \le 2c_{2,s}T_1(1+2\|u_0\|_{H^s})\|u_0\|_{H^s} \le \frac1{4(1+c_{1,s}+c_{1,3})} \, .
\end{displaymath}
Thus, we deduce by using the energy estimate \eqref{energy_estimate.1} with $s=3$ that
\begin{displaymath}
\|u\|_{L^{\infty}_{T_1}H^3_{x}}^2 \le \frac43 \|u_0\|_{H^3}^2, \quad \forall \,  0<T_1<T_0 \, .
\end{displaymath}
This implies in view of the blow-up alternative \eqref{blow-up_alt} that $T_0<T^{\star}$.

Now, the energy estimate \eqref{energy_estimate.1} at the level $s$ yields $\|u\|_{L^{\infty}_{T_0}H^s_x}^2 \le \frac43 \|u_0\|_{H^s}^2$, so that by continuity, $\|u\|_{L^{\infty}_{T_2}H^s}^2 \le \frac53\|u_0\|_{H^s}^2$ for some $T_0<T_2<T^{\star}$. This contradicts the definition of $T_0$. 

Therefore $T_0 \ge T:=(A_s\|u_0\|_{H^s}+1)^{-2}$ and we argue as above to get the bound for $\|\nabla u\|_{L^1_{T}L^{\infty}_x}$. This concludes the proof of Lemma \ref{a priori}.
\end{proof}

\subsection{Existence, persistence and continuous dependence}
Let fix $\frac74<s \le 2$ and $u_0 \in H^s(\mathbb R^2)$. We regularize the initial datum as follows.
Let $\chi$ be the cut-off function defined in \eqref{eta}, then we define
\begin{equation*}
u_{0,n}= P_{\le n}u_0=\left(\chi({|\xi|}/{n}) \widehat{u}_0(\xi)\right)^{\vee} \, ,
\end{equation*}
for any $n \in \mathbb N$, $n \ge 1$.

Then, the following estimates are well-known (see for example Lemma 5.4 in \cite{LPS1}).
\begin{lemma} \label{BSreg}
\begin{itemize}
\item[(i)]
Let $\sigma \ge 0$ and $n \ge 1$. Then,
\begin{equation} \label{BSreg.1}
\|u_{0,n}\|_{H^{s+\sigma}} \lesssim n^{\sigma} \|u_0\|_{H^s} \, , 
\end{equation}

\item[(ii)]
Let $0 \le \sigma \le s$ and $m\ge n\ge 1$. Then, 
\begin{equation} \label{BSreg.4}
\|u_{0,n}-u_{0,m}\|_{H^{s-\sigma}} \underset{n \to +\infty}{=}o(n^{-\sigma}) 
\end{equation}
\end{itemize}
\end{lemma}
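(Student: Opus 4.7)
Both estimates are standard consequences of Plancherel's theorem combined with the support properties of the cut-off function $\chi$. Recall that $\chi\in C_0^\infty(\mathbb R)$ satisfies $\chi=1$ on $[-1,1]$ and $\supp\chi\subset [-2,2]$, so that the Fourier multiplier $\chi(|\xi|/n)$ is identically $1$ on $\{|\xi|\le n\}$ and vanishes on $\{|\xi|\ge 2n\}$.

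For part (i), I would write
\begin{equation*}
\|u_{0,n}\|_{H^{s+\sigma}}^2=\int_{\mathbb R^2}\angles{\xi}^{2(s+\sigma)}\,\chi(|\xi|/n)^2\,|\widehat{u_0}(\xi)|^2\,d\xi
\end{equation*}
and note that on the support of $\chi(|\xi|/n)$ we have $|\xi|\le 2n$, whence $\angles{\xi}^{2\sigma}\lesssim n^{2\sigma}$ uniformly in $n\ge 1$ and $\sigma\ge 0$. Pulling out this factor and using $0\le\chi\le 1$ gives $\|u_{0,n}\|_{H^{s+\sigma}}^2\lesssim n^{2\sigma}\|u_0\|_{H^s}^2$.

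For part (ii), I would use that, since $m\ge n\ge 1$, the difference $\chi(|\xi|/n)-\chi(|\xi|/m)$ vanishes on $\{|\xi|\le n\}$ (both terms equal $1$ there) and is bounded by $2$ everywhere. Hence by Plancherel,
\begin{equation*}
\|u_{0,n}-u_{0,m}\|_{H^{s-\sigma}}^2\le 4\int_{|\xi|>n}\angles{\xi}^{2(s-\sigma)}|\widehat{u_0}(\xi)|^2\,d\xi.
\end{equation*}
On the domain of integration $\angles{\xi}\ge |\xi|>n$, so since $\sigma\ge 0$ we can bound $\angles{\xi}^{-2\sigma}\le n^{-2\sigma}$, yielding
\begin{equation*}
n^{2\sigma}\|u_{0,n}-u_{0,m}\|_{H^{s-\sigma}}^2\le 4\int_{|\xi|>n}\angles{\xi}^{2s}|\widehat{u_0}(\xi)|^2\,d\xi.
\end{equation*}

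The last step is to observe that the right-hand side tends to $0$ as $n\to\infty$ by the dominated convergence theorem, since $u_0\in H^s(\mathbb R^2)$ means $\angles{\xi}^{2s}|\widehat{u_0}|^2\in L^1(\mathbb R^2)$ and the indicator $\mathbbm 1_{\{|\xi|>n\}}$ converges pointwise to $0$. Crucially the bound on the right is independent of $m$, so the little-$o$ decay is uniform in $m\ge n$, which is exactly what the notation $\|u_{0,n}-u_{0,m}\|_{H^{s-\sigma}}=o(n^{-\sigma})$ encodes. No step here is a real obstacle; the only point requiring a little care is keeping the uniformity in $m$ visible when passing the $n^{\sigma}$ factor to the left-hand side.
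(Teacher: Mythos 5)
Your proof is correct and is the standard one; the paper itself does not prove the lemma but cites Lemma 5.4 of \cite{LPS1}, which proceeds by exactly the same Plancherel-plus-support argument, with the uniformity in $m$ and dominated convergence handled just as you did.
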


Now, for each $n \in \mathbb N$, $n \ge 1$, we consider the solution $u_n$ emanating from $u_{0,n}$ defined on their maximal time interval $[0,T^{\star}_n)$. In other words, $u_n$ is a solution to the Cauchy problem   
\begin{equation}\label{FDKPun}
\begin{cases}
\partial_tu_n+ L_{1 , 1} (D) \left( 1+ \frac{D_2^2}{D_1^2}\right)^\frac12 \partial_{x_1} u_n+ 3 \partial_{x_1}( u_n^2)=0,\;\;\;x\in\mathbb R^2, \ 0<t<T^{\star}_n, \\
u_n(x,0)=u_{0,n}(x)=P_{\le n}u_0(x), \quad x \in \mathbb R^2 \, .
\end{cases}
\end{equation}
From Lemmas \ref{a priori} and \ref{BSreg} (i), there exists a positive time 
\begin{equation} \label{defT}
T=(A_s\|u_0\|_{H^s}+1)^{-2} \, ,
\end{equation} 
(where $A_s$ is a positive constant), independent of $n$, such that $u_n \in C([0,T] : H^{\infty}(\mathbb R^2))$ is defined on the time interval $[0,T]$ and satisfies 
\begin{equation} \label{existence.1}
\|u_n\|_{L^{\infty}_TH^s_x} \le 2\|u_0\|_{H^s} 
\end{equation}
and 
\begin{equation} \label{existence.2}
K:=\sup_{n \ge 1}\big\{ \|\nabla u_n\|_{L^1_TL^{\infty}_{x}} \big\} <+\infty\, .
\end{equation}

Let $m \ge n \ge 1$. We set $v_{n,m} := u_n-u_m$. Then, $v_{n,m}$ satisfies
\begin{equation} \label{fKPdiffn}
\partial_tv_{n,m}+L_{1 , 1} (D) \left( 1+ \frac{D_2^2}{D_1^2}\right)^\frac12 \partial_{x_1}v_{n,m}+3\partial_x\big((u_n+u_m)v_{n,m}\big)=0 \, ,
\end{equation}
with initial datum $v_{n,m}(\cdot,0)=u_{0,n}-u_{0,m}$. 

Arguing as in Subsection \ref{section_uniq}, we see from Gronwall's inequality and \eqref{BSreg.4} with $\sigma=s$ that
\begin{equation} \label{existence.3}
\| v_{n,m} \|_{L^{\infty}_TL^2_{x}} \le e^{cK}\| u_{0,n}-u_{0,m} \|_{L^2} \underset{n \to +\infty}{=} o(n^{-s})
\end{equation}
which implies interpolating with \eqref{existence.1} that 
\begin{equation} \label{existence.4} 
\| v_{n,m} \|_{L^{\infty}_TH^{\sigma}_{x}} \le \|v_{n,m}\|_{L^{\infty}_TH^s_{x}}^{\frac{\sigma}s} \|v_{n,m}\|_{L^{\infty}_TL^2_{x}}^{1-\frac{\sigma}s}\underset{n \to +\infty}{=}o(n^{-(s-\sigma)}) \, ,
\end{equation}
for all $0 \le \sigma <s$.

Therefore, we deduce  that $\{u_n \}$ is a Cauchy sequence in $L^{\infty}([0,T] : H^{\sigma}(\mathbb R^2)$, for any $0 \le \sigma<s$. Hence, it is not difficult to verify passing to the limit as $n \to +\infty$ that $u =\lim_{n \to +\infty}u_n$ is a weak solution to \eqref{FDKP} in the class $C([0,T] : H^{\sigma}(\mathbb R^2)) $, for any $0 \le \sigma<s$. 

Finally, the proof that $u$ belongs to the class \eqref{WP:theorem.1} and of the continuous dependence of the flow follows from the Bona-Smith argument \cite{BS}. Since it is a classical argument, we skip the proof and refer the readers to \cite{LPS1,HLRRW} for more details in this setting.


\section{Appendix}

In this appendix, we derive some useful estimates on the first and second order derivatives of the function $m_{\beta}$ for $\beta=0,1$ defined in \eqref{def:m_beta}, which is an adaptation of the corresponding estimates for $m_0$ derived recently by the last two authors in \cite{DST}.

\begin{lemma}\label{lm-mest}
Let $\beta \in \{0,  1\}$. Then for all $r>0$ we have 
\begin{align}
\label{m'-est} 
0<m_{\beta}'(r)& \sim \angles{ \sqrt{  \beta} r} \angles{r}^{-1/2}
\\
\label{m''-est} 
|m_{\beta} ''(r)| &\sim   r  \angles{ \sqrt{ \beta }  r }\angles{ r}^{-5/2}
\end{align}

\end{lemma}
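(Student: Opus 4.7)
Proof plan. Since the $\beta = 0$ case is precisely the content of the estimates derived by the last two authors in \cite{DST}, the plan is to extend them to $\beta = 1$ via the factorization $m_1(r) = (1 + r^2)^{1/2} m_0(r)$. Applying the product rule yields
\begin{align*}
m_1'(r) &= \frac{r}{(1+r^2)^{1/2}} \, m_0(r) + (1+r^2)^{1/2} \, m_0'(r), \\
m_1''(r) &= (1+r^2)^{-3/2} \, m_0(r) + \frac{2r}{(1+r^2)^{1/2}} \, m_0'(r) + (1+r^2)^{1/2} \, m_0''(r).
\end{align*}

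For the first-derivative estimate \eqref{m'-est}, both summands in $m_1'(r)$ are strictly positive on $r > 0$, since $m_0(r) > 0$ and, by the $\beta = 0$ case, $m_0'(r) > 0$. The second summand alone has size $\angles{r}\cdot \angles{r}^{-1/2} = \angles{r}^{1/2}$, which provides the lower bound. For the matching upper bound, I would control the first summand using the elementary estimate $m_0(r) = (r \tanh r)^{1/2} \lesssim r \angles{r}^{-1/2}$ (which follows from $\tanh r \lesssim \min(1,r)$), giving a bound of order $r^2(1+r^2)^{-1/2}\angles{r}^{-1/2} \lesssim \angles{r}^{1/2}$. This yields $m_1'(r) \sim \angles{r}^{1/2} = \angles{r}\angles{r}^{-1/2}$, as claimed.

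For the second-derivative estimate \eqref{m''-est}, the upper bound follows by the triangle inequality together with the $\beta = 0$ bound $|m_0''(r)| \sim r \angles{r}^{-5/2}$. The lower bound is the main obstacle, because the third summand $(1+r^2)^{1/2} m_0''(r)$ can have opposite sign from the first two and be of comparable size for large $r$. My plan is to refine the $\beta = 0$ analysis so as to expose the leading asymptotics of $m_0''$: using $\tanh r = 1 + O(e^{-2r})$ one obtains, uniformly for $r \ge 1$,
\[
m_0'(r) = \tfrac12 r^{-1/2} + O(e^{-r}), \qquad m_0''(r) = -\tfrac14 r^{-3/2} + O(e^{-r}),
\]
so that the sum of the second and third summands of $m_1''$ equals $\tfrac34 r^{-1/2} + O(r^{-3/2})$, positive and of the required order $r\angles{r}^{-3/2}$. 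For $r \le 1$, I would Taylor expand directly: from $\tanh r = r - \tfrac13 r^3 + O(r^5)$ and $(1+r^2)^{1/2} = 1 + \tfrac12 r^2 + O(r^4)$ one finds $m_1(r) = r + \tfrac13 r^3 + O(r^5)$, hence $m_1''(r) = 2r + O(r^3) \sim r$. These two regimes cover all $r > 0$ outside a compact strip around $r = 1$, on which a sign check using the implicit identity $m_\beta''(r) = [2\Phi(r)\Phi''(r) - \Phi'(r)^2]/(4m_\beta(r)^3)$, with $\Phi(r) = (1+\beta r^2) r \tanh r$, rules out vanishing. The hardest step is the leading-coefficient tracking at large $r$, where the $+1$ from $2r(1+r^2)^{-1/2} m_0'$ and the $-\tfrac14$ from $(1+r^2)^{1/2} m_0''$ do not add cleanly, so the subleading terms must be verified to be of strictly smaller order.
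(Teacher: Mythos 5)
Your proposal is correct in substance but follows a genuinely different route from the paper. The paper never factors out $\angles{r}$: it works directly with $m_\beta(r)=r\angles{\sqrt\beta r}K(r)$, $K=\sqrt{\tanh r/r}$, computes $K'$ and $K''$ via hyperbolic identities, introduces $E(r)=(e^{2r}-e^{-2r}-4r)/(4r^2)\sim r\angles{r}^{-3}e^{2r}$, and arrives at the exact factorization $m''_\beta=\tfrac14 r\angles{\sqrt\beta r}K\,B(r)f_\beta(r)$ with $B\sim\angles{r}^{-2}$, so that both cases \eqref{m'-est}--\eqref{m''-est} reduce to $|f_\beta|\sim1$; for $\beta=1$ this last step is only ``easily checked'' by inspecting a plotted graph of $f_1$. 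You instead write $m_1=(1+r^2)^{1/2}m_0$, import the $\beta=0$ bounds from \cite{DST} (legitimate, since the paper itself presents its appendix as an adaptation of those estimates), and resolve the dangerous sign competition in $m_1''$ by leading-order asymptotics: the $\tfrac34 r^{-1/2}$ cancellation at infinity and the Taylor expansion $m_1=r+\tfrac13r^3+O(r^5)$ at the origin are both correct (the true error in the large-$r$ sum is even $O(r^{-5/2})$), and your identity $m''=[2\Phi\Phi''-(\Phi')^2]/(4m^3)$ is the right tool for the remaining compact range. What your approach buys is a transparent explanation of \emph{why} $m_1''>0$ for large $r$ (the coefficients $1$ and $-\tfrac14$), rather than a black-box ratio $f_1$; what it costs is that the $\beta=0$ case is outsourced and, like the paper, you are left with a finite verification on a compact $r$-interval that is asserted (``a sign check \dots rules out vanishing'') rather than performed. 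Since the paper's own treatment of exactly this step rests on a figure, your residual gap is no worse than the published argument, but if you want a self-contained proof you should either carry out that compact-interval check explicitly (e.g.\ by monotonicity or interval estimates on $2\Phi\Phi''-(\Phi')^2$) or quantify the constants $r_0,R_0$ beyond which your asymptotic regimes are valid.
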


\begin{proof}
Let
$$
T(r)=\tanh(r), \quad S(r)=\text{sech}(r), \quad  K(r)= \sqrt{T(r)/r}.
$$ 
Thn
$$m_\beta(r)=r \angles{  \sqrt{ \beta} r}    K(r).$$ 

First we prove \eqref{m'-est}. Since
\begin{align*}
K'&= \frac{ r S^2-T}{2 r^2 K}=\frac 1{2 r } \left(  K^{-1}S^2-K\right)
\end{align*}
we have 
\begin{align*}
m'_\beta &=  \angles{ \sqrt{ \beta} r} (K+rK' )+ \beta r \angles{ \sqrt{ \beta} r}^{-1}(rK)
\\
&=\frac12 \angles{ \sqrt{ \beta} r} \left(K+ K^{-1} S^2\right)  +  \beta r^2 \angles{ \sqrt{ \beta} r}^{-1} K
\\
&\sim \angles{\sqrt{ \beta} r} \angles{r}^{-1/2},
\end{align*}
where in the last line we used the fact that
\begin{equation}\label{KS-Est}
K(r)\sim  \angles{r}^{-1/2} \quad \text{and} \quad  S(r)
\sim e^{-r}.
\end{equation}

Next we prove \eqref{m''-est}. 
We have 
\begin{align*}
m''_\beta= \left(2K'+ rK''\right) \angles{\sqrt{ \beta} r} + 2 \left(K+ rK'\right) \angles{\sqrt{ \beta} r}' +r  \angles{\sqrt{ \beta} r}'' K.
\end{align*}
Now we can write
\begin{align*}
K''&= -\frac{ T S^2}{ r K}- \frac{ \left( r S^2-T \right)}{ r^3K}- \frac{ \left( r S^2-T \right)^2  }{4 r^4 K^3}
\end{align*}
which in turn implies
\begin{align*}
 2K'+ rK''
&=-\frac{ TS^2}{ K}- \frac{ \left( r S^2-T \right)^2  }{4 r^3 K^3}
\\
&=-\frac {rK^{-3}}{4} \left[    4K^4S^2+   \left( (K^2-S^2)/r\right)^2 \right].
\end{align*}
We write
\begin{align*}
(K^2-S^2)/r=  ES^2,
\end{align*}
where
$$
E(r)=\frac{e^{2r}- e^{-2r}-4r} {4r^2}.
$$
We estimate $
E(r)$ as follows:
If $ 0<r< 1$ we write
\begin{align*}
E(r)= \frac1{2r^2} \int_0^r  \left(e^{2s}+e^{-2s}-2\right) \, ds 
&= \frac2{r^2}  \int_0^r  \int_0^s \int_0^t  \left(e^{2x}+e^{-2x}\right) \, dx \, dt \, ds .
\end{align*}
Then since $e^{2x}+e^{-2x}\sim 1$ for $0<x<r<1$, we have
\begin{align*}
E(r)
&\sim\frac 2{r^2}  \int_0^r  \int_0^s \int_0^t 1 \, dx \, dt \, ds \sim r.
\end{align*}
On the other hand, if $r\ge 1$, we simply have
$$
E(r)=\frac{e^{2r}} {4r^2} [1-e^{-4r}-4r e^{-2r} ] \sim \angles{r}^{-2}e^{2r}.
$$
Therefore, 
\begin{equation} \label{E-est}
E(r)\sim 
r\angles{r}^{-3}e^{2r} \quad \text{for all}\ r>0.
\end{equation}

Now letting 
\begin{align*}
A_\beta(r)&=\angles{ \sqrt{\beta} r}^{-2}   \left[ 1+  K^{-2}S^2+ \angles{\sqrt{\beta}  r}^{-2} \right],
\\
 B(r) &= 4  S^2 +  K^{-4}E^2 S^4,
  \quad
   f_\beta(r)
= 4 \beta \frac{A_\beta(r)}{B(r) }-1
\end{align*}
we can write
\begin{align*}
m''_\beta(r)= 4^{-1}r \angles{\sqrt{\beta}  r} K   B(r)  f_\beta(r).
\end{align*}

By \eqref{KS-Est} and \eqref{E-est} we have
\begin{align*}
  B(r)
 &\sim     e^{-2r}+   \angles{r}^{2} \cdot r^2  \angles{r}^{-6} e^{4r} \cdot  e^{-4r}   
\sim  \angles{r}^{-2} ,
\end{align*}
and hence
\begin{align*}
|m_\beta''(r)| \sim r \angles{\sqrt{\beta}  r}  \angles{r}^{-\frac 32}  | f_\beta(r)|.
\end{align*}
So \eqref{m''-est} reduces to proving for all $r\ge 0$
\begin{equation}
\label{qest}
 | f_\beta(r)| \sim 1 \quad  \text{for} \ \   \beta \in \{0,  1\}.
\end{equation}
 Clearly, $| f_0(r)| =1$. On the other hand, it can be easily checked  that $| f_1(r)| \sim 1$ for $r\ge 0$ (see fig.1 below).

\begin{figure}[h!]
   \includegraphics[scale=.5]{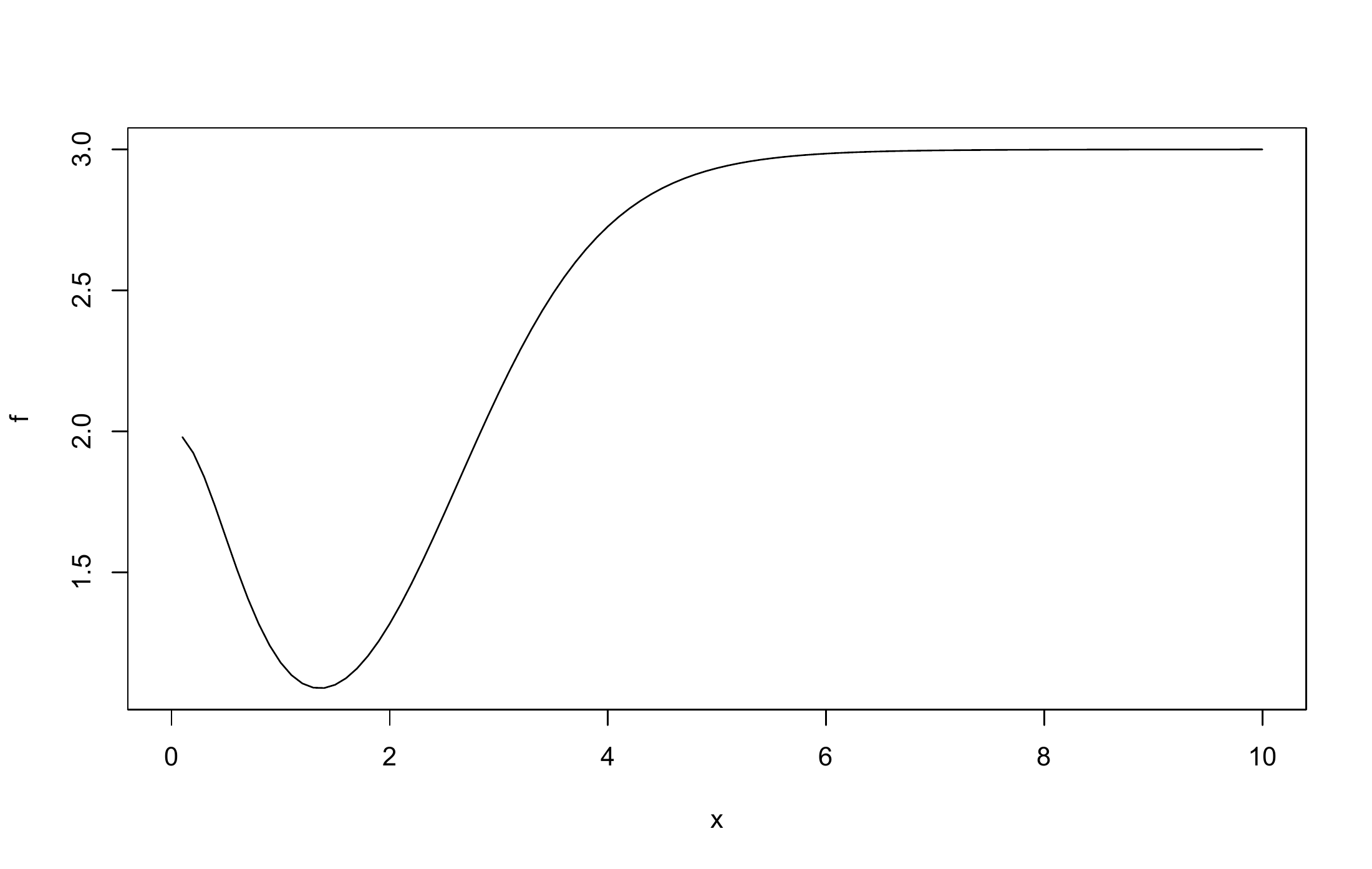}
\caption{The graph of $f_1(x)= 4 A(x) /B(x) -1$ for $x\ge 0$. It satisfies $1<f(x) \le 3$. }
\end{figure}

\end{proof}


\noindent \textbf{Acknowledgments}
D.P. was supported by the Trond Mohn Foundation grant \emph{Nonlinear dispersive equations}. S.S. and A.T. were partially supported by the Trond Mohn Foundation grant \emph{Pure mathematics in Norway}. J.-C.S. was partially supported by the ANR project ANuI (ANR-17-CE40-0035-02).

\end{document}